\newcommand{\mb}{\mathbf}
\newcommand{\mc}{\mathcal}
\newtheorem{lemma}{Lemma}
\newtheorem{proposition}{Proposition}
\newtheorem{theorem}{Theorem}
\theoremstyle{remark}
\newtheorem{remark}{Remark}
\theoremstyle{definition}
\begin{document}

\title{Nonlinear stability of self--similar solutions for semilinear wave
equations}

\author{Roland Donninger\thanks{roland.donninger@univie.ac.at}\\
\small{Faculty of Physics, Gravitational Physics}\\
\small{University of Vienna}\\
\small{Boltzmanngasse 5}\\
\small{A-1090 Wien, Austria}}

\maketitle

\begin{abstract}
We prove nonlinear stability of the fundamental self--similar solution of the
wave equation with a focusing power nonlinearity $\psi_{tt}-\Delta \psi=\psi^p$
for $p=3,5,7,\dots$ in the radial case.
The proof is based on a semigroup formulation of the wave equation in 
similarity coordinates.
\end{abstract}

\section{Introduction}

\subsection{Motivation}

We study the nonlinear wave equation 
\begin{equation}
\label{eq_pwaveintro} \psi_{tt}-\Delta \psi=\psi^p
\end{equation}
where $\psi: \mathbb{R} \times \mathbb{R}^3 \to \mathbb{R}$ and 
$p>1$ is an odd integer.
The sign of the nonlinearity corresponds to the so--called focusing case, 
i.e. the equation shows a
tendency to magnify amplitudes which might eventually lead to singularity
formation. Indeed, there are explicit examples of solutions to Eq.
(\ref{eq_pwaveintro}) with smooth compactly supported initial data that 
blow up in finite time.
In order to show this one neglects the Laplacian and solves the resulting
ordinary differential equation in $t$. 
This yields the one--parameter family of solutions
$\psi^T(t,x)=c_0^{1/(p-1)}(T-t)^{-2/(p-1)}$ where $T>0$ and
$c_0=\frac{2(p+1)}{(p-1)^2}$.
We refer to $\psi^T$ as the \emph{fundamental self--similar solution}.
Although $\psi^T$ is homogeneous in space one can use smooth cut--off functions and
finite speed of propagation to construct a blow up solution with compactly
supported data.

In numerical evolutions \cite{bizon} for the radial equation 
one observes that generic and sufficiently 
large initial data lead to solutions that approach the fundmental self--similar
solution near the center $r=0$ for $t \to T-$. 
Thus, it is conjectured that the blow up described by $\psi^T$ is generic.
This conjecture is further supported by heuristic arguments 
(\cite{bizon}, \cite{pohozaev}), rigorous arguments on the linear
stability of $\psi^T$ (\cite{roland1}, \cite{roland2}) and a number of 
rigorous 
blow up results (\cite{merle1}, \cite{merle2}, \cite{merle3}).
Moreover, in \cite{merle4}, Merle and Zaag have studied the corresponding problem in one space dimension without symmetry assumptions and for arbitrary $p>1$.
It turns out that the fundamental self--similar solution $\psi^T$ is in fact a member of a more general family of explicit solutions that can be obtained by applying symmetry transformations (e.g. the Lorentz transform) to $\psi^T$.
Merle and Zaag have proved nonlinear stability of this family of solutions in the topology of the energy space, see Theorem 3 on p. 48 in \cite{merle4}.
Furthermore, the aforementioned authors have obtained important and deep results on the blow up curve for the one--dimensional problem \cite{merle5}, \cite{merle6}.

In the present paper we give a rigorous proof for the nonlinear stability (in a sense
to be made precise below) of the
fundamental self--similar solution $\psi^T$ in the radial case in dimension $3$.

\subsection{Overview}
The result is proved by using an operator formulation in 
similarity coordinates.
The coordinates $(\tau,\rho)$ are defined by $\tau:=-\log(T-t)$,
$\rho:=\frac{r}{T-t}$ and we restrict ourselves to $\rho \in (0,1)$ which
corresponds to the interior of the backward lightcone of the blow up point
$(t,r)=(T,0)$.
Thus, convergence to $\psi^T$ is shown only in this region.
In similarity coordinates, $t \to T-$ is equivalent to $\tau \to \infty$ 
and hence, we
are actually studying an asymptotic stability problem.
We rewrite Eq. (\ref{eq_pwaveintro}) as a first--order system in similarity
coordinates \footnote{We use the notation $L-\frac{2}{p-1}$ since this $L$ is exactly
the operator that has been studied in \cite{roland1} and \cite{roland2}.} 
\begin{equation}
\label{eq_cssintro}
\frac{d}{d\tau}\Phi(\tau)=\left (L-\frac{2}{p-1} \right )\Phi(\tau)+N(\Phi(\tau)) 
\end{equation}
where $\Phi$ consists of two components that are (roughly speaking) 
the time and space derivatives of nonlinear perturbations of $\psi^T$.
Hence, $\Phi \equiv 0$ corresponds to the fundamental self--similar solution of
Eq. (\ref{eq_pwaveintro}).
$L$ is a linear spatial differential operator that is realized as an
unbounded linear operator on an appropriate Hilbert space and, finally, $N$ is
the nonlinearity resulting from Eq. (\ref{eq_pwaveintro}).
The fundamental self--similar solution is asymptotically 
stable if any solution of Eq. (\ref{eq_cssintro}) with sufficiently small data
goes to zero as $\tau \to \infty$.
However, this cannot be quite true since there is an instability that emerges
from time translation symmetry. 
This instability comes from the fact that $\psi^T$ is a one--parameter 
family of
solutions rather than a single one.
Hence, the freedom of changing the blow up time is reflected by an unstable 
mode $\mb{g}$ of the linear operator $L-\frac{2}{p-1}$.
This mode is often referred to as the \emph{gauge mode}.
What we are actually interested in is stability modulo this symmetry, so--called
orbital stability.
On the linearized level, i.e. for the equation
\begin{equation} 
\label{eq_linearintro}
\frac{d}{d\tau}\Phi(\tau)
=\left (L-\frac{2}{p-1} \right )\Phi(\tau), 
\end{equation}
one defines an appropriate projection that removes the gauge 
instability from the
spectrum of $L-\frac{2}{p-1}$ and works on the stable subspace.
Then one can prove that any solution of Eq. (\ref{eq_linearintro}) decays as
$\tau \to \infty$ provided that $\Phi(0)$ belongs to the stable subspace.
Our result shows that this remains true on the nonlinear level in a certain
sense.
More precise, we prove that, given \emph{small} initial data $\mb{u}$, there
exists a constant $\alpha_\mb{u}$ such that Eq. (\ref{eq_cssintro}) has a unique
global solution $\Phi$ with initial data $\Phi(0)=\mb{u}+\alpha_\mb{u}\mb{g}$
and $\Phi$ decays for $\tau \to \infty$.
Moreover, the rate of decay is exactly given by the first stable mode of the linearized
operator $L-\frac{2}{p-1}$.
In other words, for any small perturbation $\mb{u}$
there exists a correction, which consists of adding a
multiple of the gauge mode, that leads to a time evolution that converges to the 
fundamental self--similar solution (i.e. to zero in this formulation) 
as $\tau \to \infty$.

This rigorous result corresponds to the following heuristic picture: 
If the data are small, the problem is essentially linear and it is possible 
to expand
the initial data in a sum of modes of the linearized operator.
Generic initial data will contain a contribution of the gauge mode and hence,
these data will lead to a solution that grows in time. 
However, by adding an appropriate multiple of the gauge mode to the data, 
it is possible to remove this instability and the resulting time evolution will
decay as $\tau \to \infty$.
 
The proof is based on a Banach iteration and it depends heavily on the good 
understanding of the linearized operator $L-\frac{2}{p-1}$ which has been
obtained in \cite{roland1} and \cite{roland2}.
The operator $L-\frac{2}{p-1}$ generates a strongly continuous semigroup
$\tilde{S}(\tau)$, i.e. the unique solution of the linearized problem Eq.
(\ref{eq_linearintro}) is given by $\Phi(\tau)=\tilde{S}(\tau)\Phi(0)$.
By applying the variation of constants formula we rewrite 
Eq. (\ref{eq_cssintro}) as an integral equation
\begin{equation}
\label{eq_intintro}
\Phi(\tau)=\tilde{S}(\tau)\Phi(0)+\int_0^\tau
\tilde{S}(\tau-\sigma)N(\Phi(\sigma))d\sigma
\end{equation}
and solve it by a fixed point iteration which is global in time.
To this end we define a mapping $K_\mb{u}$ by
$$ K_\mb{u}(\Phi)(\tau):=\tilde{S}(\tau)[\mb{u}+\alpha_\mb{u}(\Phi)\mb{g}]+
\int_0^\tau
\tilde{S}(\tau-\sigma)N(\Phi(\sigma))d\sigma $$
where $\mb{u}$ are the given (small) initial data and
$\alpha_\mb{u}(\Phi)\mb{g}$ is the correction which can be calculated by an 
explicit formula in terms of $\Phi$ and $\mb{u}$.
We study $K_\mb{u}$ on the Banach space 
$\mc{X}:=C([0,\infty), \mc{H}^{2k}) \cap
L^\infty([0,\infty),\mc{H}^{2k})$ with norm 
$$ \|\Phi\|_\mc{X}:=\sup_{\tau>0}\|\Phi(\tau)\|_{\mc{H}^{2k}} $$
where $\mc{H}^{2k}$ is an appropriate Sobolev space (integration with respect to
the spatial variable).
For $\delta>0$ we set 
$$ \mc{Y}_\delta:=\{\Phi \in \mc{X}: \|\Phi(\tau)\|_{\mc{H}^{2k}}\leq \delta
e^{-\tau} \mbox{ for all }\tau>0\} $$
and show that $\Phi \in \mc{Y}_\delta$ implies $K_\mb{u}(\Phi) \in
\mc{Y}_\delta$ provided that $\delta$ and $\|\mb{u}\|_{\mc{H}^{2k}}$ are
sufficiently small.
Under these smallness assumptions we further prove that $K_\mb{u}$ is a
contraction with respect to $\|\cdot\|_\mc{X}$ which yields the existence of a
fixed point in $\mc{Y}_\delta$ by the contraction mapping principle.
Thereby, we obtain the unique solution of Eq. (\ref{eq_cssintro}) with the desired
decay property.

\subsection{Additional remarks}
Let us briefly contrast our approach to the remarkable paper \cite{merle4} by Merle and Zaag.
The philosophy in \cite{merle4} is very different since the aforementioned authors study the behavior of \emph{any} blow up solution whereas we only consider small perturbations of $\psi^T$. 
Furthermore, the topology in \cite{merle4} is much weaker.
The proof of the impressive result in \cite{merle4} relies on the existence of a Lyapunov functional in similarity coordinates and therefore, the techniques used there are completely different to ours.
As a consequence, our approach yields independent and novel insights into self--similar blow up problems for nonlinear wave equations.
Although the result in \cite{merle4} is proved in dimension 1, the authors argue that the extension to higher dimensions $N$ is only technical. However, they have to require $1 < p \leq 1+\frac{4}{N-1}$, i.e., $1<p \leq 3$ for $N=3$.
It is clear that such a restriction does not exist in our approach.
In particular, we are able to cover the full energy supercritical range $p=7,9,11,\dots$ which has remained mostly unexplored so far.
We also note that our requirement of $p$ being an odd integer is a mere technicality to keep things as simple as possible.
In fact, one may equally apply our techniques to the problem Eq.~(\ref{eq_pwaveintro}) with the nonlinearity $|\psi|^{p-1}\psi$ for real $p>1$.
In order to treat the nonlinear term one would have to use results from \cite{merle4}, in particular the nonlinear estimate Claim 5.3 on p.~104 and the Hardy--Sobolev estimate of Lemma 2.2 on p.~51. 
Finally, the reader may recognize that some aspects of the present paper are inspired by 
the work
of Krieger and Schlag
\cite{schlag} (see also \cite{schlag2} for a survey) on the energy critical wave equation.

\subsection{Notation}
We write vectors as boldface letters and the components
are numbered by lower indices, e.g. $\mathbf{u}=(u_1,u_2)$.
For a Banach space $X$ we denote by $\mathcal{B}(X)$ the space of bounded linear
operators on $X$.
Throughout this work we use the symbols $\mc{H}$ and $\mc{H}^{2k}$ to denote the
Sobolev spaces $\mc{H}:=L^2(0,1) \times L^2(0,1)$ and
$$ \mc{H}^{2k}:=\{\mb{u} \in H^{2k}(0,1) \times H^{2k}(0,1): u_1^{(2j)}(0)=
u_2^{(2j+1)}(0)=0, j \in \mathbb{N}_0, j<k\} $$
for a $k \in \mathbb{N}$. 
We equip $\mc{H}$ and $\mc{H}^{2k}$ with the inner products
$$ (\mb{u}|\mb{v})_\mc{H}:=\sum_{j=1}^2 
\int_0^1 u_j(\rho)\overline{v_j(\rho)}d\rho $$
and
$$
(\mb{u}|\mb{v})_{\mc{H}^{2k}}:=(\mb{u}|\mb{v})_\mc{H}+\left (\mb{u}^{(2k)}
\left|\mb{v}^{(2k)}\right. \right )_\mc{H}. $$
Thus, $\mc{H}$ and $\mc{H}^{2k}$ are Hilbert spaces (cf. \cite{roland2}).
Finally, the expression $A \lesssim B$ means that there exists a $C>0$ such that
$A \leq CB$. 

\section{Derivation of the equations}
\subsection{Similarity coordinates and first--order formulation}
We consider the equation 
\begin{equation}
\label{eq_pwave}
\psi_{tt}-\Delta \psi=\psi^p
\end{equation} 
for $p=3,5,7,\dots$ in spherical symmetry.
The fundamental self--similar solution $\psi^T$ is given by 
$\psi^T(t,r)=c_0^{1/(p-1)}(T-t)^{-2/(p-1)}$ where $c_0=\frac{2(p+1)}{(p-1)^2}$
and $T>0$ is an arbitrary constant.
We are interested in small perturbations of $\psi^T$ and thus, we insert the
ansatz $\psi=\psi^T+\phi$ into Eq. (\ref{eq_pwave}) and apply the binomial
theorem to obtain
$$ \phi_{tt}-\Delta \phi=p(\psi^T)^{p-1}\phi+\sum_{j=2}^p \left (
\begin{array}{c}p \\ j \end{array} \right )(\psi^T)^{p-j}\phi^j. $$
With the substitution $\phi(t,r) \mapsto \tilde{\phi}(t,r):=r\phi(t,r)$, this
equation
transforms into
\begin{equation}
\label{eq_pwavetilde}
\tilde{\phi}_{tt}-\tilde{\phi}_{rr}=p(\psi^T)^{p-1}\tilde{\phi}
+r\sum_{j=2}^p \left (
\begin{array}{c}p \\ j \end{array} \right )(\psi^T)^{p-j}\left (
\frac{\tilde{\phi}}{r} \right )^j
\end{equation}
and we pick up the boundary condition $\tilde{\phi}(t,0)=0$ for all $t$.
Eq. (\ref{eq_pwavetilde}) is equivalent to the 
first--order system
\begin{multline}
\label{eq_pwavetilde1st}
\partial_t \left ( \begin{array}{c}\tilde{\phi}_t
\\ \tilde{\phi}_r \end{array} \right
)=\left ( \begin{array}{cc} 0 & \partial_r \\
\partial_r & 0 \end{array} \right )\left ( \begin{array}{c}\tilde{\phi}_t
\\ \tilde{\phi}_r \end{array} \right )+
\left ( \begin{array}{c} p (\psi^T)^{p-1} \int_0^r 
\tilde{\phi}_r(t,s)ds  \\ 0
\end{array} 
\right ) \\
+\sum_{j=2}^{p-1} \left (
\begin{array}{c}p \\ j \end{array} \right )
\left ( \begin{array}{c} r (\psi^T)^{p-j} \left( \frac{1}{r}\int_0^r 
\tilde{\phi}_r(t,s)ds \right )^j \\ 0
\end{array} 
\right ).
\end{multline}
Our aim is to study nonlinear perturbations of the fundamental self--similar
solution by using a formulation in similarity coordinates.
Appropriate similarity coordinates $(\tau,\rho)$ are given by
$\tau=-\log(T-t)$, $\rho=\frac{r}{T-t}$ and we restrict ourselves to the
interior of the backward lightcone of the blow up point $(t,r)=(T,0)$, that is
$\rho \in (0,1)$.
Eq. (\ref{eq_pwavetilde1st}) transforms into
\begin{multline}
\label{eq_pert}
\partial_\tau \left ( \begin{array}{c}\phi_1
\\ \phi_2 \end{array} \right
)=\left ( \begin{array}{cc}-\rho \partial_\rho-\frac{2}{p-1} & \partial_\rho \\
\partial_\rho & -\rho \partial_\rho-\frac{2}{p-1}
\end{array} \right )
\left ( \begin{array}{c}\phi_1 \\ \phi_2
\end{array} \right )
+\left ( \begin{array}{c} pc_0 \int_0^\rho \phi_2(\tau,\xi)d\xi \\ 0 \end{array} \right ) \\
+\sum_{j=2}^p \left ( \begin{array}{c}p \\ j 
\end{array} \right ) c_0^{\frac{p-j}{p-1}}\left ( \begin{array}{c} \rho  \left (\frac{1}{\rho} \int_0^\rho \phi_2(\tau,\xi)d\xi \right )^j \\ 0 
\end{array} \right )
\end{multline}
where
$\phi_1(\tau,\rho)=e^{-\frac{2}{p-1}\tau}
\tilde{\phi}_t(T-e^{-\tau},\rho e^{-\tau})$ and
$\phi_2(\tau,\rho)=e^{-\frac{2}{p-1}\tau}\tilde{\phi}_r(T-e^{-\tau},\rho 
e^{-\tau})$.

\subsection{Operator formulation}
We intend to formulate Eq. (\ref{eq_pert}) as an ordinary differential equation on the Hilbert space $\mc{H}$. To this end we define the operator $\tilde{L}: \mc{D}(\tilde{L}) \subset \mc{H} \to \mc{H}$ by 
$$ \mc{D}(\tilde{L}):=\{\mb{u} \in C^1[0,1] \times C^1[0,1]: u_1(0)=0\} $$ and
$$ \tilde{L}\mb{u}(\rho):=\left ( \begin{array}{c}-\rho u_1'(\rho)+u_2'(\rho)+pc_0 \int_0^\rho u_2(\xi)d\xi \\ u_1'(\rho)-\rho u_2'(\rho) \end{array} \right ). $$
An operator formulation of Eq. (\ref{eq_pert}) is given by
\begin{equation}
\label{eq_opnonlinearformal}
\frac{d}{d\tau}\Phi(\tau)=\left ( \tilde{L}-\frac{2}{p-1} \right )\Phi(\tau)+N(\Phi(\tau)) 
\end{equation}
where the nonlinearity $N$ is defined as
%$$ N(\mb{u})(\rho):=\left (  \rho \sum_{j=2}^p \left (
%\begin{array}{c}p \\ j \end{array} \right ) c_0^{\frac{p-j}{p-1}} \left [
%\frac{1}{\rho}\int_0^\rho u_2(\xi)d\xi \right ]^j, 0  \right )^T. $$
$$ N(\mb{u})(\rho)=\sum_{j=2}^p \left (\begin{array}{c} p \\ j \end{array} \right )
c_0^{\frac{p-j}{p-1}}\left ( \begin{array}{c} \rho \left ( \frac{1}{\rho}
\int_0^\rho u_2(\xi)d\xi \right )^j \\ 0 \end{array}
\right ).$$
Note that we are still on a formal level since 
$N(\mb{u}) \notin \mc{H}$ for general $\mb{u} \in \mc{H}$ 
as the example $\mb{u}(\rho)=(0,\rho^{-1/4})$ immediately shows.
However, after a more careful analysis of the nonlinearity $N$ in the 
next section, we will be able to turn Eq. (\ref{eq_opnonlinearformal}) into a 
well--defined operator differential equation.

\section{Formulation as an operator differential equation}

We analyse the nonlinearity $N$ in Eq. (\ref{eq_opnonlinearformal}) more 
carefully and formulate the problem we are going to study in a precise manner.

\subsection{Properties of the nonlinearity}
We need the following generalization of Hardy's inequality. 
The proof is
elementary but will be given in the appendix for the sake of completeness.

\begin{lemma}
\label{lem_hardy}
\begin{enumerate}
\item Let $v \in C^\infty[0,1]$ and $k \in \mathbb{N}$. Then
$$ \int_0^1 |v(x)|^2dx \lesssim \int_0^1 \frac{1}{x^{2k-2}}\left |
\frac{d}{dx}x^k v(x)\right |^2 dx. $$
\item Let $u \in C^\infty[0,1]$ with $u(0)=0$ and $j \in \mathbb{N}_0$. Then
$$ \int_0^1 \left |\frac{d^j}{dx^j}\frac{u(x)}{x}\right |^2dx \lesssim \int_0^1
|u^{(j+1)}(x)|^2 dx. $$
\end{enumerate}
\end{lemma}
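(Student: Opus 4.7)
The plan is to prove Part 1 first by a direct energy computation, splitting into the cases $k \geq 2$ and $k = 1$, and then to derive Part 2 as a corollary of Part 1.

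For Part 1, the first step is to rewrite the right-hand side using the elementary identity $\frac{1}{x^{k-1}}\frac{d}{dx}\bigl[x^k v(x)\bigr] = kv(x) + xv'(x)$, so that the claim becomes
$$\int_0^1 v(x)^2\, dx \lesssim \int_0^1 \bigl(kv(x)+xv'(x)\bigr)^2 dx.$$
Expanding the square and integrating the cross term by parts, $2k\int_0^1 xvv'\,dx = k[xv^2]_0^1 - k\int_0^1 v^2\,dx$ (the boundary term at $0$ vanishing because $v$ is smooth), yields the identity
$$\int_0^1 (kv+xv')^2\,dx \;=\; k(k-1)\int_0^1 v^2\,dx \;+\; k\,v(1)^2 \;+\; \int_0^1 x^2 v'(x)^2\,dx.$$
For $k \geq 2$ the coefficient $k(k-1) \geq 2 > 0$ delivers the inequality immediately. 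For $k=1$ this identity is not by itself enough (the $v^2$ term disappears), and we instead invoke the classical Hardy inequality $\int_0^1 (w/x)^2 dx \leq 4\int_0^1 (w')^2 dx$ valid for smooth $w$ with $w(0)=0$ (itself proved by integrating $w^2/x^2$ by parts against $-\frac{d}{dx}(1/x)$ and applying Cauchy--Schwarz); specialising to $w(x):=xv(x)$ gives $w' = v + xv'$ and $w/x = v$, which is exactly the desired bound.

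For Part 2, introduce $v(x):=u(x)/x$, which extends to a smooth function on $[0,1]$ by Taylor's theorem since $u \in C^\infty[0,1]$ and $u(0)=0$. Applying the Leibniz rule to $u(x)=xv(x)$ gives
$$u^{(j+1)}(x) \;=\; (j+1)\,v^{(j)}(x) \;+\; x\,v^{(j+1)}(x).$$
Now apply Part 1 with $k=j+1$ to the smooth function $v^{(j)}$ in place of $v$:
$$\int_0^1 \left|\frac{d^j}{dx^j}\frac{u(x)}{x}\right|^2 dx \;=\; \int_0^1 \bigl|v^{(j)}(x)\bigr|^2 dx \;\lesssim\; \int_0^1 \bigl|(j+1)v^{(j)}(x)+xv^{(j+1)}(x)\bigr|^2 dx \;=\; \int_0^1 \bigl|u^{(j+1)}(x)\bigr|^2 dx,$$
which is the claim.

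The only non-routine point is the case $k=1$ of Part 1: unlike the higher-$k$ cases, the positive-coefficient identity degenerates, and one genuinely has to borrow the standard Hardy estimate. Everything else is algebra (Leibniz, one integration by parts) and careful bookkeeping of the boundary terms.
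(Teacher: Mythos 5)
Your proof is correct, and it reaches the result by a somewhat different route than the paper. For Part~1 the paper runs a single integration-by-parts plus Cauchy--Schwarz absorption argument (write $|v|^2 = x^{-2k}|x^kv|^2$, integrate by parts, discard the boundary term at $1$, absorb $\bigl(\int|v|^2\bigr)^{1/2}$), which works uniformly for all $k\geq 1$ and is really just the classical Hardy proof in weighted form. You instead expand $\int(kv+xv')^2$ into the exact identity $k(k-1)\int v^2 + kv(1)^2+\int x^2(v')^2$, which for $k\geq 2$ gives the inequality with the explicit constant $1/(k(k-1))$ and no Cauchy--Schwarz at all; you correctly notice that this degenerates at $k=1$ and fall back on classical Hardy there --- which is precisely the paper's argument in that case. (Cosmetic point: since the ambient Hilbert spaces are complex, the cross term should be $2k\,\mathrm{Re}\int x\bar v v'\,dx$, but the identity survives verbatim with $|v|^2$ in place of $v^2$.) For Part~2 both proofs apply Part~1 with $k=j+1$ to $v^{(j)}=(u/x)^{(j)}$, and the whole content is the identity $\frac{d}{dx}\bigl[x^{j+1}(u/x)^{(j)}\bigr]=x^ju^{(j+1)}$; the paper obtains it from an explicit formula for $(u/x)^{(j)}$ and a telescoping sum, whereas you get it in one line from the Leibniz rule applied to $u=xv$ (noting that $v=u/x$ extends smoothly because $u(0)=0$). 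Your derivation of the key identity is cleaner; the paper's Part~1 argument is more uniform since it avoids the case split at $k=1$.
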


\begin{proof}
See Appendix \ref{proof_lem_hardy}
\end{proof}

From now on we assume $k \in \mathbb{N}$ arbitrary but fixed.
The following lemma establishes two crucial estimates for the nonlinearity $N$.

\begin{lemma}
\label{lem_N}
$N$ defines a mapping from $\mc{H}^{2k}$ to itself.
Furthermore, we have the estimates
$$ \|N(\mb{u})\|_{\mc{H}^{2k}} \lesssim \sum_{j=2}^p \|\mb{u}\|_{\mc{H}^{2k}}^j
$$
and 
$$ \|N(\mb{u})-N(\mb{v})\|_{\mc{H}^{2k}} \lesssim
\|\mb{u}-\mb{v}\|_{\mc{H}^{2k}}\sum_{j=2}^p\sum_{\ell=0}^{j-1}
\|\mb{u}\|_{\mc{H}^{2k}}^{j-1-\ell}
\|\mb{v}\|_{\mc{H}^{2k}}^\ell $$
for all $\mb{u}, \mb{v} \in \mc{H}^{2k}$.
\end{lemma}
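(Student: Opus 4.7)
\medskip

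\noindent\textbf{Proof plan.} The strategy is to reduce everything to Sobolev estimates for the single scalar function
$$ f(\rho) := \frac{1}{\rho}\int_0^\rho u_2(\xi)\,d\xi, $$
and then exploit the fact that $H^{2k}(0,1)$ is a Banach algebra (since $2k \geq 1 > \tfrac12$, one has $H^{2k}(0,1) \hookrightarrow C[0,1]$ and the usual Leibniz estimate closes).

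\emph{Step 1 (Hardy bound for $f$).} Setting $g(\rho) := \int_0^\rho u_2(\xi)\,d\xi$, we have $g(0) = 0$ and $g' = u_2$. Part 2 of Lemma \ref{lem_hardy} applied with $u = g$ and $j = 0, 1, \ldots, 2k$ gives $\|f^{(j)}\|_{L^2} \lesssim \|u_2^{(j)}\|_{L^2}$, which sums to $\|f\|_{H^{2k}} \lesssim \|u_2\|_{H^{2k}} \lesssim \|\mathbf{u}\|_{\mathcal{H}^{2k}}$.

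\emph{Step 2 (algebra and growth estimate).} The algebra property of $H^{2k}(0,1)$ gives $\|f^j\|_{H^{2k}} \lesssim \|f\|_{H^{2k}}^j$, and multiplication by the smooth factor $\rho$ is continuous on $H^{2k}$. Summing over $j$ and using Step 1 produces the growth bound in the $H^{2k}\times H^{2k}$ norm.

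\emph{Step 3 (boundary conditions).} To confirm $N(\mathbf{u}) \in \mathcal{H}^{2k}$ I need the first component to satisfy $N(\mathbf{u})_1^{(2\ell)}(0) = 0$ for $\ell = 0, 1, \ldots, k-1$ (the second component is zero). Differentiating the identity $\rho f(\rho) = g(\rho)$ yields $f^{(n)}(0) = \frac{u_2^{(n)}(0)}{n+1}$ for all $n \geq 0$, so the parity condition $u_2^{(2j+1)}(0) = 0$ for $j < k$ transfers to $f^{(2j+1)}(0) = 0$ for $j < k$. A Leibniz (multinomial) expansion of $(f^j)^{(2\ell-1)}$ then shows that every term in the expansion contains at least one factor $f^{(i_m)}$ with $i_m$ odd (because the total order $2\ell - 1$ is odd) and $i_m \leq 2\ell - 1 \leq 2k - 1$; each such factor vanishes at $0$, so $(f^j)^{(2\ell-1)}(0) = 0$ for $1 \leq \ell \leq k$. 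Leibniz applied to $\rho f^j$ finally gives $(\rho f^j)^{(2\ell)}(0) = 2\ell\,(f^j)^{(2\ell - 1)}(0) = 0$ for $0 \leq \ell \leq k-1$.

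\emph{Step 4 (Lipschitz bound).} Writing
$$ f_{\mathbf{u}}^{\,j} - f_{\mathbf{v}}^{\,j} = (f_{\mathbf{u}} - f_{\mathbf{v}}) \sum_{\ell=0}^{j-1} f_{\mathbf{u}}^{\,j-1-\ell}\, f_{\mathbf{v}}^{\,\ell}, $$
and noting that $\mathbf{u}\mapsto f_{\mathbf{u}}$ is linear in $u_2$, Step 1 yields $\|f_{\mathbf{u}} - f_{\mathbf{v}}\|_{H^{2k}} \lesssim \|\mathbf{u} - \mathbf{v}\|_{\mathcal{H}^{2k}}$, and the algebra property applied to each summand delivers the claimed difference estimate.

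\emph{Main obstacle.} The Hardy and algebra estimates are standard once set up. The real bookkeeping burden is Step 3: one must verify that the asymmetric boundary conditions $u_1^{(2j)}(0) = u_2^{(2j+1)}(0) = 0$ of $\mathcal{H}^{2k}$ are preserved by the composite operation "integrate, divide by $\rho$, raise to the $j$-th power, multiply by $\rho$." The key observation driving this is that $f$ inherits the \emph{opposite} parity of $u_2$ at $\rho = 0$, and the prefactor $\rho$ then shifts it back so that the first component of $N(\mathbf{u})$ has exactly the parity required of $u_1$ in $\mathcal{H}^{2k}$.
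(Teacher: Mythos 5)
Your proof is correct and follows essentially the same route as the paper's: the Hardy inequality of Lemma \ref{lem_hardy} applied to $\rho\mapsto\frac{1}{\rho}\int_0^\rho u_2$, the Banach algebra property of $H^{2k}(0,1)$, the telescoping identity for the difference bound, and a parity argument at $\rho=0$ for membership in $\mathcal{H}^{2k}$. Your Step 3 merely spells out in detail the boundary-condition verification that the paper asserts in one line.
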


\begin{proof}
Let $\mathbf{u} \in \mathcal{H}^{2k}$ and define 
$\tilde{u}(\rho):=\frac{1}{\rho}\int_0^\rho u_2(\xi)d\xi$.
Then, 
$$ N(\mb{u})(\rho)=\sum_{j=2}^p \left (\begin{array}{c} p \\ j \end{array} \right )
c_0^{\frac{p-j}{p-1}}\left ( \begin{array}{c} \rho \tilde{u}(\rho)^j \\ 0 \end{array}
\right )$$ and by Lemma \ref{lem_hardy}, $\tilde{u} \in H^{2k}(0,1)$. 
We estimate
$\|\rho \mapsto \rho \tilde{u}(\rho)^j\|_{H^{2k}(0,1)} \lesssim 
\|\tilde{u}\|_{H^{2k}(0,1)}^j
$
since $H^{2k}(0,1)$ is a Banach algebra and Lemma \ref{lem_hardy} again implies
$\|\tilde{u}\|_{H^{2k}(0,1)}^j \lesssim \|u_2\|_{H^{2k}(0,1)}^j$ for $j=2,\dots,p$.
This shows
$$ \|N(\mb{u})\|_{\mc{H}^{2k}} \lesssim \sum_{j=2}^p \|\mb{u}\|_{\mc{H}^{2k}}^j.
$$ 
Note that $u_2^{(2\ell+1)}(0)=0$ for all $\ell < k$ which implies 
$\tilde{u}^{(2\ell+1)}(0)=0$ for all $\ell < k$.
The same holds true for $\tilde{u}^j$ and we conclude
$N(\mb{u}) \in \mc{H}^{2k}$.

To prove the second inequality we take $\mb{v} \in \mc{H}^{2k}$, define
$\tilde{v}(\rho):=\frac{1}{\rho}\int_0^\rho v_2(\xi)d\xi$ and note that 
$$ \tilde{u}^j-\tilde{v}^j=(\tilde{u}-\tilde{v})\sum_{\ell=0}^{j-1}
\tilde{u}^{j-1-\ell}\tilde{v}^\ell. $$
However, this already implies
$$ \|N(\mb{u})-N(\mb{v})\|_{\mc{H}^{2k}} \lesssim \|\mb{u}-\mb{v}\|_{\mc{H}^{2k}}
\sum_{j=2}^p \sum_{\ell=0}^{j-1}
\|\mb{u}\|_{\mc{H}^{2k}}^{j-1-\ell}\|\mb{v}\|_{\mc{H}^{2k}}^\ell $$
by the same reasoning as above.
\end{proof}

\subsection{The operator differential equation}
It is known that the operator $\tilde{L}$ is closable (see \cite{roland1}) and
we denote its closure by $L$. 
The nonlinear functional differential equation we are going to study is
\begin{equation}
\label{eq_opnonlinear}
\frac{d}{d\tau}\Phi(\tau)=\left ( L-\frac{2}{p-1} \right
)\Phi(\tau)+N(\Phi(\tau))
\end{equation}
where $\Phi: [0,\infty) \to \mc{H}$.
A function $\Phi: [0,\infty) \to \mc{H}$ 
is said to be an $\mc{H}^{2k}$--solution of 
Eq. (\ref{eq_opnonlinear}) with initial data $\mb{u} \in \mc{H}^{2k}$ if
\begin{itemize}
\item $\Phi(\tau) \in \mc{H}^{2k}$ for all $\tau>0$,
\item $\Phi$ is strongly differentiable in $\mc{H}^{2k}$, 
i.e. for any $\tau>0$ there exists an element
$\frac{d}{d\tau}\Phi(\tau) \in \mc{H}^{2k}$ such that
$$ \lim_{\sigma \to \tau}\left \|
\frac{\Phi(\tau)-\Phi(\sigma)}{\tau-\sigma}-\frac{d}{d\tau}\Phi(\tau)
\right \|_{\mc{H}^{2k}}=0,
$$
\item $\Phi(0)=\mb{u}$, 
\item $\Phi$ satisfies Eq. (\ref{eq_opnonlinear}) for all $\tau>0$.
\end{itemize}
We recall that Eq. (\ref{eq_opnonlinear}) is equivalent to the nonlinear wave
equation Eq. (\ref{eq_pwave}). The fundamental self--similar solution $\psi^T$
of Eq. (\ref{eq_pwave}) corresponds to the zero solution $\Phi \equiv 0$ for Eq.
(\ref{eq_opnonlinear}).

\section{The linearized operator}
The analysis of the nonlinear problem Eq. (\ref{eq_opnonlinear}) depends 
heavily on a good understanding of the linearization.
Thus, we review and extend some results of \cite{roland2} on the linearized
operator $L-\frac{2}{p-1}$.

\subsection{Spectral properties and growth estimates}
It is known (see \cite{roland2}) that the operator $L$ possesses a countable 
set of eigenvalues $\lambda_j^\pm$ with polynomial eigenfunctions $\mb{u}(\cdot,
\lambda_j^\pm)$ (that is, each component of $\mb{u}(\cdot,\lambda_j^\pm)$ is a
polynomial) where
$\lambda_j^+=1+\frac{2}{p-1}-2j$ and $\lambda_j^-=-\frac{2p}{p-1}-2j$.
Occasionally, we will refer to these eigenvalues as \emph{analytic eigenvalues}.
The single unstable eigenvalue $\lambda_0^+$ emerges from time
translation symmetry, i.e. the freedom of choosing the blow up time $T$ in the
definition of the similarity coordinates (see \cite{roland1} for a more thorough discussion).
Therefore, this instability is normally referred to as the \emph{gauge instability} since
it does not correspond to a "real" instability of $\psi^T$ but rather to a change
of the blow up time.
We have the following result from \cite{roland2}.

\begin{theorem}
\label{thm_S}
The operator $L$ generates a strongly continuous semigroup $S: [0,\infty) \to
\mc{B}(\mc{H})$ and the space $\mc{H}^{2k}$ is $L$--admissible (i.e.
$S(\tau)\mc{H}^{2k} \subset \mc{H}^{2k}$ and $S(\tau)|_{\mc{H}^{2k}}$ defines a
strongly continuous semigroup on $\mc{H}^{2k}$). For any $\mb{u} \in
\mc{H}^{2k}$ there exist constants $c_0^\pm, \dots, c_{k-1}^\pm \in \mathbb{C}$
and a function $\mb{f} \in \mc{H}^{2k}$ such that 
$$ \mb{u}=\sum_{j=0}^{k-1}\left (c_j^+ \mb{u}(\cdot, \lambda_j^+)+c_j^-
\mb{u}(\cdot, \lambda_j^-) \right ) + \mb{f} $$
and $\|S(\tau)\mb{f}\|_{\mc{H}^{2k}}\lesssim
e^{(\frac{1}{2}+pc_0-2k)\tau}\|\mb{f}\|_{\mc{H}^{2k}}$ for $\tau>0$ 
where $\mb{u}(\cdot,
\lambda_j^\pm)$ are normalized eigenfunctions of $L$ with eigenvalues 
$\lambda_j^+=1+\frac{2}{p-1}-2j$ and $\lambda_j^-=-\frac{2p}{p-1}-2j$.
\end{theorem}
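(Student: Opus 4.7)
The plan is to generate the semigroup on $\mc{H}$ by Lumer--Phillips, propagate $\mc{H}^{2k}$--regularity by commuting the generator with $\partial_\rho^{2k}$, and then extract the decay rate via a Dunford--Riesz projection onto the first $k$ pairs of analytic eigenfunctions. Write $\omega := \tfrac{1}{2}+pc_0$. For the first step I would check dissipativity of $\tilde L - \omega$ on $\mc{D}(\tilde L)$: integration by parts in $(\tilde L\mb{u}|\mb{u})_\mc{H}$ combines the off--diagonal $\int(u_1'\bar u_2 + u_2'\bar u_1)$ and the diagonal $-\int \rho(|u_1|^2+|u_2|^2)'$ into the interior contribution $+\tfrac{1}{2}\|\mb{u}\|_\mc{H}^2$ plus a non--positive boundary piece $-\tfrac{1}{2}|u_1(1)-u_2(1)|^2$ (the endpoint $\rho=0$ being killed by $u_1(0)=0$), while the Volterra tail $pc_0\int_0^\rho u_2$ is absorbed by Cauchy--Schwarz. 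Range denseness of $\lambda - \tilde L$ at a single real $\lambda>\omega$ reduces to solving an explicit first--order linear ODE system on $(0,1)$ by variation of parameters; combined with closability of $\tilde L$, Lumer--Phillips yields the $C_0$--semigroup $S(\tau)$ generated by $L$ on $\mc{H}$.

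For $\mc{H}^{2k}$--admissibility I would commute $\partial_\rho^{2k}$ past $\tilde L$. The identity $\partial_\rho^{2k}(-\rho u') = -\rho u^{(2k+1)} - 2k\,u^{(2k)}$ converts each diagonal entry $-\rho\partial_\rho$ into $-\rho\partial_\rho - 2k$; the off--diagonal $\partial_\rho$ commutes; and the Volterra contribution differentiates down to $pc_0 u_2^{(2k-1)}$, a strictly lower--order term controlled by $\|\mb{u}\|_{\mc{H}^{2k}}$. The conditions $u_1^{(2j)}(0) = u_2^{(2j+1)}(0) = 0$ baked into $\mc{H}^{2k}$ are precisely those preserved by this commutation and ensure that $\partial_\rho^{2k}(\tilde L\mb{u})$ remains in $L^2\times L^2$. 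A repeated Lumer--Phillips estimate at the $(2k)$--th derivative level then gives $S(\tau)\mc{H}^{2k}\subset\mc{H}^{2k}$ together with the strong continuity of $S(\tau)|_{\mc{H}^{2k}}$.

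For the spectral decomposition I would use that each polynomial eigenfunction $\mb{u}(\cdot,\lambda_j^\pm)$ lies in $\mc{H}^{2k}$ and that the finite set $\{\lambda_j^\pm : j<k\}$ consists of isolated eigenvalues of $L|_{\mc{H}^{2k}}$ --- indeed, the explicit ODE form of $(\lambda - \tilde L)\mb{u} = \mb{g}$ allows one to verify that the only points of $\sigma(L)$ above the line $\Re\lambda = \omega - 2k$ are precisely these analytic eigenvalues. The Dunford--Riesz projection
\begin{equation*}
P_k := \frac{1}{2\pi i}\oint_\Gamma (\lambda - L)^{-1}\,d\lambda,
\end{equation*}
taken along a contour $\Gamma$ enclosing exactly $\{\lambda_j^\pm : j<k\}$, is bounded of rank $2k$ and produces the asserted decomposition $\mb{u} = P_k\mb{u} + \mb{f}$ with coefficients $c_j^\pm$ read off from the eigenbasis. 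Since the spectrum of $L$ restricted to the closed $L$--invariant subspace $(I-P_k)\mc{H}^{2k}$ lies in $\{\Re\lambda \leq \omega - 2k\}$, the Gearhart--Pr\"uss theorem (applicable because $\mc{H}^{2k}$ is a Hilbert space) delivers the desired bound $\|S(\tau)\mb{f}\|_{\mc{H}^{2k}}\lesssim e^{(\omega-2k)\tau}\|\mb{f}\|_{\mc{H}^{2k}}$.

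The main obstacle is the uniform resolvent estimate on vertical lines $\Re\lambda = \omega - 2k + \varepsilon$ that Gearhart--Pr\"uss demands. This boils down to asymptotic control of the fundamental matrix of the ODE $(\lambda - \tilde L)\mb{u} = 0$ both as $|\Im\lambda| \to \infty$ and near the regular--singular endpoint $\rho = 1$, where the characteristic speed of $\tilde L$ matches the coordinate velocity. Producing such estimates is the genuinely delicate technical content, and is precisely what the spectral analysis of \cite{roland1, roland2} supplies; the remainder of the proof is a routine assembly of semigroup--theoretic ingredients around this input.
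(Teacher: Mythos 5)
The first thing to say is that the paper does not prove this statement at all: Theorem \ref{thm_S} is imported verbatim from \cite{roland2} (``We have the following result from \cite{roland2}''), so there is no in--paper proof to measure your attempt against. Judged on its own terms, your outline gets the elementary ingredients right: the dissipativity computation for $\tilde L-(\tfrac12+pc_0)$ (the boundary terms do combine into $-\tfrac12|u_1(1)-u_2(1)|^2$ once $u_1(0)=0$ is used, and the Volterra term is absorbed by Cauchy--Schwarz), the range condition as an explicit ODE solve, and the Leibniz identity $\partial_\rho^{2k}(-\rho u')=-\rho u^{(2k+1)}-2k\,u^{(2k)}$, which is indeed the mechanism producing the $-2k$ in the exponent and explains the parity conditions built into $\mc{H}^{2k}$. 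These parts are correct and essentially reconstruct how the generation and admissibility statements are obtained in \cite{roland1,roland2}.

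The gap is in the third paragraph, which is where the entire content of the theorem lives. You assert that the only spectrum of $L|_{\mc{H}^{2k}}$ in $\{\operatorname{Re}\lambda>\tfrac12+pc_0-2k\}$ is the finite set $\{\lambda_j^\pm: j<k\}$, that these are isolated eigenvalues admitting a rank--$2k$ Dunford--Riesz projection, and that the resolvent is uniformly bounded on vertical lines so that Gearhart--Pr\"uss applies --- and you then concede that establishing these facts is ``precisely what \cite{roland1,roland2} supplies.'' But that is the theorem; everything you actually carry out is the routine packaging around it. Two further points deserve attention. First, Gearhart--Pr\"uss identifies the growth bound with the abscissa of uniform boundedness of the resolvent, so it yields $\|S(\tau)\mb f\|\lesssim_\varepsilon e^{(\frac12+pc_0-2k+\varepsilon)\tau}\|\mb f\|$ for every $\varepsilon>0$ rather than the stated estimate at the exact rate; this is harmless for the application (one only needs $k$ large), but it is not literally the bound claimed. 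Second, your sketch never connects the derivative--level estimate of paragraph two to the full $\mc{H}^{2k}$--norm: controlling $\|S(\tau)\mb u\|_{L^2\times L^2}$ by $\|(S(\tau)\mb u)^{(2k)}\|_{L^2\times L^2}$ costs exactly a $2k$--dimensional space of admissible polynomials, and the identification of that space with the span of the analytic eigenfunctions is the structural reason the decomposition $\mb u=\sum_j(c_j^+\mb u(\cdot,\lambda_j^+)+c_j^-\mb u(\cdot,\lambda_j^-))+\mb f$ takes the form it does (and why, per Remark \ref{rem_N}, $\mb f$ can be taken orthogonal to $\mc{N}$ with $\mc{N}^\perp$ invariant). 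This descent mechanism, rather than a contour integral plus Gearhart--Pr\"uss, is the route of \cite{roland2}, and it is the step your proposal leaves unaddressed.
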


\begin{remark}
\label{rem_N}
Additionally, we remark that the function $\mb{f}$ in Theorem \ref{thm_S} is
orthogonal (in $\mc{H}^{2k}$) to the $2k$ eigenfunctions $\mb{u}(\cdot,\lambda_j^\pm)$, 
$j=0,1,\dots,k-1$ (cf. \cite{roland2}).
However, it is important to note that the eigenfunctions $\mb{u}(\cdot, \lambda_j^\pm)$
are not orthogonal to each other since $L$ is not normal!
For brevity we denote the span of the $2k$ eigenfunctions by $\mc{N}$, i.e. 
$$ \mc{N}:=\langle \mb{u}(\cdot, \lambda_0^\pm), \mb{u}(\cdot, \lambda_1^\pm), \dots, 
\mb{u}(\cdot, \lambda_{k-1}^\pm) \rangle. $$
In \cite{roland2} it has been shown that the orthogonal complement 
$\mc{N}^\perp$ (in $\mc{H}^{2k}$) of the subspace $\mc{N}$
is invariant under $S(\tau)$ and the estimate
$\|S(\tau)\mb{f}\|_{\mc{H}^{2k}} \lesssim e^{(\frac{1}{2}+pc_0-2k)\tau}\|\mb{f}\|_{\mc{H}^{2k}}$, 
$\tau>0$,
is valid for all $\mb{f} \in \mc{N}^\perp$.
\end{remark}
 
 \subsection{Projection on the unstable subspace}
 
In what follows we denote the normalized eigenfunction $\mb{u}(\cdot,
\lambda_0^+)$ (the \emph{gauge mode}) by $\mb{g}$.
Our aim is to define a projection on the unstable subspace $\langle \mb{g}
\rangle$ that behaves nicely with respect to the time evolution generated by
$S(\tau)$.
First, we make the following easy observation.

\begin{lemma}
\label{lem_c0+unique}
The expansion coefficients $c_j^\pm$, $j=0,1,\dots, k-1$, in 
Theorem \ref{thm_S} are uniquely determined.
\end{lemma}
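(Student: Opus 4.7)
The plan is to reduce the uniqueness claim to two standard facts: the uniqueness of the orthogonal decomposition of a Hilbert space by a closed subspace, and the linear independence of eigenvectors of a linear operator corresponding to pairwise distinct eigenvalues. I write the given expansion as $\mb{u} = \mb{n} + \mb{f}$ with $\mb{n} := \sum_{j=0}^{k-1}\bigl(c_j^+\mb{u}(\cdot,\lambda_j^+) + c_j^-\mb{u}(\cdot,\lambda_j^-)\bigr) \in \mc{N}$ and $\mb{f} \in \mc{N}^\perp$, the latter by Remark \ref{rem_N}. Uniqueness of the pair $(\mb{n},\mb{f})$, hence of $\mb{n}$, is then immediate: $\mc{N}$ is finite-dimensional and therefore closed in the Hilbert space $\mc{H}^{2k}$, so $\mc{H}^{2k} = \mc{N} \oplus \mc{N}^\perp$, and from any two such decompositions $\mb{u} = \mb{n}_1 + \mb{f}_1 = \mb{n}_2 + \mb{f}_2$ one gets $\mb{n}_1 - \mb{n}_2 = \mb{f}_2 - \mb{f}_1 \in \mc{N} \cap \mc{N}^\perp = \{0\}$.

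The remaining step is to deduce from the uniqueness of $\mb{n}$ the uniqueness of the coefficients $c_j^\pm$. This amounts to showing that the $2k$ vectors $\{\mb{u}(\cdot,\lambda_j^\pm) : 0 \le j \le k-1\}$ form a basis of $\mc{N}$. By the definition of $\mc{N}$ recorded in Remark \ref{rem_N} they span it, so what is needed is their linear independence. Here I invoke the standard spectral-theory fact that eigenvectors of a linear operator corresponding to pairwise distinct eigenvalues are linearly independent. The $\lambda_j^+$'s are visibly distinct among themselves, and so are the $\lambda_j^-$'s; an equality $\lambda_j^+ = \lambda_i^-$ reduces to $\tfrac{3p+1}{p-1} = 2(j-i)$, which must be ruled out in the relevant range of $i,j$.

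The only real obstacle is this final bookkeeping check of pairwise distinctness of the $2k$ eigenvalues, which depends mildly on $p$ and $k$; everything else is a direct application of Hilbert-space orthogonal decomposition together with classical linear algebra on the finite-dimensional subspace $\mc{N}$. Notably, no dynamical information about the semigroup $S(\tau)$ enters the argument: the lemma is purely a statement about the ambient Hilbert-space geometry of $\mc{H}^{2k}$ and the algebraic structure of the eigenspaces of $L$ built into Theorem \ref{thm_S}.
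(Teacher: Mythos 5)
Your overall structure coincides with the paper's: the paper applies the orthogonal projection $Q$ onto $\mc{N}$ and uses $\mb{f}\in\mc{N}^\perp$, which is exactly your ``uniqueness of the decomposition $\mc{H}^{2k}=\mc{N}\oplus\mc{N}^\perp$'' step, and then both arguments reduce the claim to the linear independence of the $2k$ eigenfunctions $\mb{u}(\cdot,\lambda_j^\pm)$. That first half is correct and is the same argument in different words.

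The gap is in your justification of the linear independence. You propose to derive it from the standard fact that eigenvectors to pairwise \emph{distinct} eigenvalues are independent, and you correctly reduce the only nontrivial coincidence $\lambda_j^+=\lambda_i^-$ to $\tfrac{3p+1}{p-1}=2(j-i)$, i.e.\ $3+\tfrac{4}{p-1}=2(j-i)$. But this cannot be ruled out in general: for $p=5$ the left-hand side equals $4$, so $\lambda_{i+2}^+=\lambda_i^-$ for every $i$ (e.g.\ $\lambda_2^+=\lambda_0^-=-\tfrac{5}{2}$), and since the theorem requires $k$ large (certainly $k\geq 3$), such pairs do occur in the relevant range $0\leq i,j\leq k-1$. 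So the distinct-eigenvalue argument fails to cover all pairs precisely at the point you dismissed as bookkeeping. The linear independence is still true, but it has to come from the explicit polynomial structure of the eigenfunctions established in \cite{roland2}; the paper simply asserts it as known from there, whereas your proof, which tries to avoid that input, is incomplete for $p=5$. To close the gap you must import that structural information about $\mb{u}(\cdot,\lambda_j^\pm)$ — spectral generalities about distinct eigenvalues do not suffice.
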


\begin{proof}
We denote by $Q \in \mc{B}(\mc{H}^{2k})$ the orthogonal projection on $\mc{N}$.
Projecting the expansion from Theorem \ref{thm_S} we obtain
$Q\mb{u}=\sum_{j=0}^{k-1}c_j^\pm \mb{u}(\cdot,\lambda_j^\pm)$
since $\mb{f} \in \mc{N}^\perp$ (Remark \ref{rem_N}).
The set $\{\mb{u}(\cdot, \lambda_j^\pm): j=0,\dots,k-1\}$ is linearly
independent and this implies the claim.
\end{proof}

To emphasize the dependence of the constants $c_j^\pm$ on $\mb{u}$ we write
$c_j^\pm(\mb{u})$.
Obviously, $c_j^\pm(\mb{u})$ is linear in $\mb{u}$.

\begin{lemma}
For any $j=0,1,\dots,k-1$ the mapping $\mb{u} \mapsto c_j^\pm(\mb{u}):
\mc{H}^{2k} \to \mathbb{C}$ is bounded.
\end{lemma}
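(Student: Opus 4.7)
The plan is to reduce the claim to a fact about finite--dimensional spaces, using the orthogonal projection $Q$ already introduced in the proof of Lemma~\ref{lem_c0+unique}. Indeed, from that lemma we have the identity
\[ Q\mb{u}=\sum_{j=0}^{k-1}\bigl(c_j^+(\mb{u})\mb{u}(\cdot,\lambda_j^+)+c_j^-(\mb{u})\mb{u}(\cdot,\lambda_j^-)\bigr), \]
so each $c_j^\pm$ factors as $\mb{u}\mapsto Q\mb{u}\mapsto c_j^\pm(Q\mb{u})$. Since $Q$ is an orthogonal projection on $\mc{H}^{2k}$ it is bounded (with norm at most $1$), so it suffices to show that the coordinate functionals with respect to the basis $\{\mb{u}(\cdot,\lambda_j^\pm):j=0,\dots,k-1\}$ are bounded on the finite--dimensional subspace $\mc{N}$.

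First I would simply invoke the standard fact that every linear functional on a finite--dimensional normed space is continuous: $\mc{N}$ is $2k$--dimensional and the eigenfunctions form a basis (by the linear independence observed in Lemma~\ref{lem_c0+unique}), so each coordinate map $\mc{N}\to\mathbb{C}$ is automatically bounded with respect to $\|\cdot\|_{\mc{H}^{2k}}$ restricted to $\mc{N}$. Composing with $Q$ gives the desired boundedness on all of $\mc{H}^{2k}$.

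If a more quantitative statement is wanted, I would instead give the explicit formula. Label the basis $\mb{e}_1,\dots,\mb{e}_{2k}$ and let $G$ be the $2k\times 2k$ Gram matrix with entries $G_{ij}=(\mb{e}_i|\mb{e}_j)_{\mc{H}^{2k}}$. Linear independence of the eigenfunctions implies $G$ is invertible. Taking the inner product of the expansion $Q\mb{u}=\sum_j c_j\mb{e}_j$ with $\mb{e}_i$, and using $(\mb{u}|\mb{e}_i)_{\mc{H}^{2k}}=(Q\mb{u}|\mb{e}_i)_{\mc{H}^{2k}}$ since $\mb{e}_i\in\mc{N}$, yields
\[ c_j=\sum_{i=1}^{2k}(G^{-1})_{ji}\,(\mb{u}|\mb{e}_i)_{\mc{H}^{2k}}, \]
and Cauchy--Schwarz then gives $|c_j|\lesssim\|\mb{u}\|_{\mc{H}^{2k}}$.

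There is no real obstacle here: the only mild subtlety is that the eigenfunctions are \emph{not} orthogonal (as emphasized in Remark~\ref{rem_N}), so one cannot simply write $c_j^\pm(\mb{u})=(\mb{u}|\mb{u}(\cdot,\lambda_j^\pm))_{\mc{H}^{2k}}/\|\mb{u}(\cdot,\lambda_j^\pm)\|^2_{\mc{H}^{2k}}$. The Gram--matrix device (or, equivalently, the appeal to finite--dimensionality) handles precisely this issue.
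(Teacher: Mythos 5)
Your proof is correct and follows essentially the same route as the paper: factor $c_j^\pm$ through the orthogonal projection $Q$ onto $\mc{N}$ and use that every linear functional on a finite--dimensional normed space is bounded. The Gram--matrix computation you add is just an explicit quantitative version of the same argument and is not needed.
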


\begin{proof}
Again, we denote by $Q \in \mc{B}(\mc{H}^{2k})$ the orthogonal projection on
$\mc{N}$. By definition and Remark \ref{rem_N} we have 
$c_j^\pm(\mb{u})=c_j^\pm(Q\mb{u})$.
However, $Q\mb{u}\mapsto c_j^\pm(Q\mb{u})$ is a linear mapping between the two
finite--dimensional Banach spaces $\mc{N}$ and $\mathbb{C}$ and hence, it is
bounded. We obtain 
$$ |c_j^\pm(\mb{u})|=|c_j^\pm(Q\mb{u})| \leq C
\|Q\mb{u}\|_{\mc{H}^{2k}} \leq C\|\mb{u}\|_{\mc{H}^{2k}} $$
for a $C>0$, any $\mb{u} \in \mc{H}^{2k}$ and all $j=0,1,\dots,k-1$.
\end{proof}

Thus, $\mb{u} \mapsto c_0^+(\mb{u})$ is a bounded linear functional on
$\mc{H}^{2k}$ and it follows from Riesz' theorem that there exists a 
$\mb{g}^* \in \mc{H}^{2k}$
such that $c_0^+(\mb{u})=(\mb{u}|\mb{g}^*)_{\mc{H}^{2k}}$ for all 
$\mb{u} \in \mc{H}^{2k}$.
By definition we have $(\mb{g}|\mb{g}^*)_{\mc{H}^{2k}}=1$.
We define the mapping $P: \mc{H}^{2k} \to
\mc{H}^{2k}$ by $P\mb{u}:=c_0^+(\mb{u})\mb{g}=(\mb{u}|\mb{g}^*)_{\mc{H}^{2k}}\mb{g}$.
It is clear that $P$ is linear and bounded.
Furthermore, we have $P^2=P$ and thus, $P$ is a projection on the closed subspace 
$\langle \mb{g} \rangle$ of $\mc{H}^{2k}$.
However, $P$ is not an orthogonal projection and hence not self--adjoint.

\begin{lemma}
\label{lem_SP}
The projection $P$ commutes with the semigroup $S$, i.e. $S(\tau)P\mb{u}=PS(\tau)\mb{u}$ for all 
$\tau>0$ and $\mb{u} \in \mc{H}^{2k}$.
\end{lemma}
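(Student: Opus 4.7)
The plan is to unfold both sides using the eigenfunction expansion from Theorem \ref{thm_S} and to use the uniqueness of the expansion coefficients (Lemma \ref{lem_c0+unique}) together with the $S(\tau)$-invariance of $\mc{N}^\perp$ (Remark \ref{rem_N}).

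First I would compute the left-hand side. Since $P\mb{u}=c_0^+(\mb{u})\mb{g}$ is a scalar multiple of the eigenfunction $\mb{g}=\mb{u}(\cdot,\lambda_0^+)$, and $\mb{g} \in \mc{D}(L)$ with $L\mb{g}=\lambda_0^+ \mb{g}$, the semigroup acts as $S(\tau)\mb{g}=e^{\lambda_0^+\tau}\mb{g}$. Hence
$$ S(\tau)P\mb{u}=c_0^+(\mb{u})\,e^{\lambda_0^+\tau}\mb{g}. $$

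Next I would compute the right-hand side. Using Theorem \ref{thm_S} write
$$ \mb{u}=\sum_{j=0}^{k-1}\bigl(c_j^+(\mb{u})\mb{u}(\cdot,\lambda_j^+)+c_j^-(\mb{u})\mb{u}(\cdot,\lambda_j^-)\bigr)+\mb{f}, \qquad \mb{f}\in\mc{N}^\perp. $$
Applying $S(\tau)$ and using that each $\mb{u}(\cdot,\lambda_j^\pm)$ is an eigenfunction together with the $L$-admissibility of $\mc{H}^{2k}$, we obtain
$$ S(\tau)\mb{u}=\sum_{j=0}^{k-1}\bigl(c_j^+(\mb{u})e^{\lambda_j^+\tau}\mb{u}(\cdot,\lambda_j^+)+c_j^-(\mb{u})e^{\lambda_j^-\tau}\mb{u}(\cdot,\lambda_j^-)\bigr)+S(\tau)\mb{f}. $$
By Remark \ref{rem_N}, $\mc{N}^\perp$ is invariant under $S(\tau)$, so $S(\tau)\mb{f}\in\mc{N}^\perp$. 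Thus the displayed formula is a decomposition of $S(\tau)\mb{u}$ of the form described in Theorem \ref{thm_S}, and the uniqueness statement of Lemma \ref{lem_c0+unique} identifies the coefficients as $c_j^\pm(S(\tau)\mb{u})=e^{\lambda_j^\pm\tau}c_j^\pm(\mb{u})$. Taking $j=0$ with the plus sign gives $c_0^+(S(\tau)\mb{u})=e^{\lambda_0^+\tau}c_0^+(\mb{u})$, and therefore
$$ PS(\tau)\mb{u}=c_0^+(S(\tau)\mb{u})\mb{g}=c_0^+(\mb{u})\,e^{\lambda_0^+\tau}\mb{g}=S(\tau)P\mb{u}, $$
which is the desired commutation.

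The only subtle point is that the argument relies on passing the eigenfunction expansion through the semigroup, which in turn rests on the invariance of $\mc{N}^\perp$ under $S(\tau)$ recorded in Remark \ref{rem_N}; without this, $S(\tau)\mb{f}$ would not automatically be the ``remainder'' part of the expansion of $S(\tau)\mb{u}$ and Lemma \ref{lem_c0+unique} could not be invoked. Since this invariance is cited from \cite{roland2}, the proof reduces to bookkeeping of the eigenvalue multipliers, and no further analytic estimates are needed.
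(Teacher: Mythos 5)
Your proof is correct and follows essentially the same route as the paper: expand $\mb{u}$ via Theorem \ref{thm_S}, push $S(\tau)$ through the expansion using the eigenfunction property and the $S(\tau)$-invariance of $\mc{N}^\perp$ from Remark \ref{rem_N}, and then read off $c_0^+(S(\tau)\mb{u})=e^{\lambda_0^+\tau}c_0^+(\mb{u})$. The only cosmetic difference is that you explicitly invoke the uniqueness of the coefficients (Lemma \ref{lem_c0+unique}) where the paper leaves this implicit in the phrase ``by definition of $P$''; that is a welcome clarification, not a deviation.
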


\begin{proof}
Fix $\tau >0$ and let $\mb{u} \in \mc{H}^{2k}$.
Invoking Theorem \ref{thm_S} we obtain
$$ \mb{u}=\sum_{j=0}^{k-1}c_j^\pm \mb{u}(\cdot, \lambda_j^\pm)+\mb{f} $$
where $\mb{f} \in \mc{N}^\perp$ and applying $S(\tau)$ yields
$$ S(\tau)\mb{u}=\sum_{j=0}^{k-1}c_j^\pm e^{\lambda_j^\pm \tau}\mb{u}(\cdot, \lambda_j^\pm)
+S(\tau)\mb{f}. $$
Note that this is an expansion of $S(\tau)\mb{u}$ in the sense of 
Theorem \ref{thm_S} since $S(\tau)\mb{f} \in \mc{N}^\perp$ by Remark \ref{rem_N}.
Thus, by definition of $P$ we have $PS(\tau)\mb{u}=c_0^+e^{\lambda_0^+ \tau}\mb{g}$.

On the other hand, we have $S(\tau)P\mb{u}=S(\tau)c_0^+ \mb{g}=c_0^+ e^{\lambda_0^+ \tau}\mb{g}$.
\end{proof}

\subsection{Properties of $\tilde{S}$}
Actually we are interested in the semigroup $\tilde{S}$ defined by $\tilde{S}(\tau)=e^{-\frac{2}{p-1}\tau}S(\tau)$ which yields the solution of the linearized equation
$$ \frac{d}{d\tau}\Phi(\tau)=\left (L-\frac{2}{p-1} \right ) \Phi(\tau). $$
But $\tilde{S}$ is only a trivial rescaling of $S$ and so we can immediately deduce important properties.
First of all we remark that the projection $P$ commutes with $\tilde{S}(\tau)$ (Lemma \ref{lem_SP}).
Furthermore, the gauge mode is an eigenfunction of $L-\frac{2}{p-1}$ with eigenvalue $1$ and thus,
we have $\tilde{S}(\tau)\mb{g}=e^\tau\mb{g}$.
Appropriate growth estimates are given in the following proposition.

\begin{proposition}
\label{prop_S}
If $k \in \mathbb{N}$ is sufficiently large, 
the semigroup $\tilde{S}$ satisfies the estimates
$$ \|\tilde{S}(\tau)\mb{u}\|_{\mc{H}^{2k}} \lesssim e^{\tau}\|\mb{u}\|_{\mc{H}^{2k}} $$
and
$$  \|\tilde{S}(\tau)(I-P)\mb{u}\|_{\mc{H}^{2k}} \lesssim e^{-\tau}\|(I-P)\mb{u}\|_{\mc{H}^{2k}} $$
for all $\mb{u} \in \mc{H}^{2k}$ and $\tau>0$.
\end{proposition}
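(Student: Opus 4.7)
The plan is to reduce both bounds to the spectral decomposition of Theorem~\ref{thm_S} combined with the eigenvalue identities $\lambda_j^+ - \frac{2}{p-1} = 1 - 2j$ and $\lambda_j^- - \frac{2}{p-1} = -\frac{2(p+1)}{p-1} - 2j$, and then tune $k$ so that the remainder estimate absorbs into the desired rate.

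Given $\mathbf{u} \in \mathcal{H}^{2k}$, I apply Theorem~\ref{thm_S} and Lemma~\ref{lem_c0+unique} to write
\[
\mathbf{u} = \sum_{j=0}^{k-1}\bigl(c_j^+(\mathbf{u})\mathbf{u}(\cdot,\lambda_j^+) + c_j^-(\mathbf{u})\mathbf{u}(\cdot,\lambda_j^-)\bigr) + \mathbf{f},
\]
with $\mathbf{f} \in \mathcal{N}^\perp$ and $|c_j^\pm(\mathbf{u})| + \|\mathbf{f}\|_{\mathcal{H}^{2k}} \lesssim \|\mathbf{u}\|_{\mathcal{H}^{2k}}$ (the last of these follows from the preceding lemma and the triangle inequality). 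Since each $\mathbf{u}(\cdot,\lambda_j^\pm)$ is an eigenfunction of $L$, $S(\tau)$ acts on it by multiplication by $e^{\lambda_j^\pm\tau}$, and hence $\tilde S(\tau)$ acts by $e^{(\lambda_j^\pm - 2/(p-1))\tau}$. With the identities above, every surviving exponent is $\le 1$, with equality occurring only in the $\lambda_0^+$ slot. Combining this with $\|\tilde S(\tau)\mathbf{f}\|_{\mathcal{H}^{2k}} \lesssim e^{(\frac12+pc_0-2k-\frac{2}{p-1})\tau}\|\mathbf{f}\|_{\mathcal{H}^{2k}}$ and choosing $k$ large enough so that $\frac12+pc_0-2k-\frac{2}{p-1} \le 1$ (in fact $\le -1$, which I will need below), the triangle inequality yields the first bound.

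For the second bound, the point is that $(I-P)$ removes precisely the $\lambda_0^+$ contribution from the expansion. Because $\mathbf{g} = \mathbf{u}(\cdot,\lambda_0^+)$ expands trivially with $c_0^+(\mathbf{g})=1$ and all other coefficients (and remainder) zero, linearity of $c_j^\pm$ gives $c_0^+(P\mathbf{u}) = c_0^+(\mathbf{u})$ and $c_j^\pm(P\mathbf{u}) = 0$ for $(j,\pm) \ne (0,+)$. Applying the decomposition to $(I-P)\mathbf{u} \in \mathcal{H}^{2k}$ then produces an expansion with vanishing $\lambda_0^+$--coefficient, the same remainder $\mathbf{f}$, and coefficients bounded by $\|(I-P)\mathbf{u}\|_{\mathcal{H}^{2k}}$. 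Now every surviving shifted eigenvalue satisfies $\operatorname{Re}(\lambda_j^\pm - \frac{2}{p-1}) \le -1$ (the maximum is $\lambda_1^+ - \frac{2}{p-1} = -1$), and for $k$ sufficiently large the remainder term is also bounded by $e^{-\tau}\|\mathbf{f}\|_{\mathcal{H}^{2k}}$, yielding the second estimate.

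The only mildly technical point is the quantitative choice of $k$, which must satisfy $\frac12+pc_0-2k-\frac{2}{p-1} \le -1$; this is a one--line arithmetic check. The real content of the argument is the observation that $(I-P)$ annihilates exactly the gauge--mode component of the spectral expansion, which follows from the uniqueness statement of Lemma~\ref{lem_c0+unique} and the linearity of the coefficient functionals.
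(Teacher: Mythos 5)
Your proof is correct and follows the same route as the paper, which simply invokes Theorem~\ref{thm_S} together with the fact that the largest analytic eigenvalue of $L$ other than $\lambda_0^+$ is $\lambda_1^+=\frac{2}{p-1}-1$ (i.e.\ shifted eigenvalue $-1$); you have merely filled in the details, including the correct observation that $I-P$ removes only the $\lambda_0^+$ coefficient while leaving the other coefficients and the remainder $\mathbf{f}$ unchanged.
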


\begin{proof}
The estimates are immediate consequences of Theorem \ref{thm_S} and the fact that the largest 
analytic eigenvalue of $L$ apart from $\lambda_0^+$ is $\lambda_1^+=\frac{2}{p-1}-1$.
\end{proof}

In what follows we implicitly assume $k$ to be so large that Proposition
\ref{prop_S} holds.

\section{Global existence for the nonlinear problem}
\label{sec_globnl}

Our intention is to prove existence for Eq. (\ref{eq_opnonlinear}) by
means of a Banach iteration which is global in time.

\subsection{Function spaces}
Let $\mc{X}:=L^\infty([0,\infty), \mc{H}^{2k}) \cap C([0,\infty),\mc{H}^{2k})$ 
and set
$\|\Phi\|_\mc{X}:=\sup_{\tau>0}\|\Phi(\tau)\|_{\mc{H}^{2k}}$.
$\mc{X}$ equipped with $\|\cdot\|_\mc{X}$ is a Banach space.
We define the closed subset $\mc{Y}_\delta \subset \mc{X}$ by
$$ \mc{Y}_\delta:=\{\Phi \in \mc{X}:
\|\Phi(\tau)\|_{\mc{H}^{2k}}\leq \delta
e^{-\tau} \mbox{ for all }\tau>0\} $$
where $\delta>0$.
As the following lemma shows, the nonlinearity $N$ behaves well on 
$\mc{Y}_\delta$ provided that $\delta$ is
chosen small enough.

\begin{lemma}
\label{lem_propN}
If $\delta \leq 1$ then there exists a constant $c>0$ such that 
$$\|N(\Phi(\tau))\|_{\mc{H}^{2k}}\leq c \delta^2 e^{-2\tau}$$ and
$$\|N(\Phi(\tau))-N(\Psi(\tau))\|_{\mc{H}^{2k}}\leq c \delta
e^{-\tau}\|\Phi(\tau)-\Psi(\tau)\|_{\mc{H}^{2k}}$$
for all $\Phi, \Psi \in \mc{Y}_\delta$ and $\tau>0$.
\end{lemma}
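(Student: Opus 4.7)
The proof is a direct consequence of the two estimates already established in Lemma \ref{lem_N}, together with the defining exponential bound of $\mathcal{Y}_\delta$. The only thing to do is to extract the factors $\delta^2 e^{-2\tau}$ and $\delta e^{-\tau}$ cleanly from the sums that appear there.

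First I would deal with the size estimate. Fix $\Phi \in \mathcal{Y}_\delta$ and $\tau>0$. By definition of $\mathcal{Y}_\delta$ we have $\|\Phi(\tau)\|_{\mathcal{H}^{2k}} \leq \delta e^{-\tau}$, so Lemma \ref{lem_N} gives
$$ \|N(\Phi(\tau))\|_{\mathcal{H}^{2k}} \lesssim \sum_{j=2}^{p} \delta^j e^{-j\tau} = \delta^2 e^{-2\tau}\sum_{j=2}^{p} \delta^{j-2} e^{-(j-2)\tau}. $$
Since $\delta \leq 1$ and $\tau>0$, each factor $\delta^{j-2} e^{-(j-2)\tau}$ is bounded by $1$, and the sum has at most $p-1$ terms. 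Absorbing $p$ into the constant produces the claimed bound $c\delta^2 e^{-2\tau}$.

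The Lipschitz estimate is completely analogous. For $\Phi, \Psi \in \mathcal{Y}_\delta$ I apply the second inequality of Lemma \ref{lem_N} with $\mathbf{u}=\Phi(\tau)$, $\mathbf{v}=\Psi(\tau)$, and plug in the uniform bounds $\|\Phi(\tau)\|_{\mathcal{H}^{2k}}, \|\Psi(\tau)\|_{\mathcal{H}^{2k}} \leq \delta e^{-\tau}$:
$$ \|N(\Phi(\tau))-N(\Psi(\tau))\|_{\mathcal{H}^{2k}} \lesssim \|\Phi(\tau)-\Psi(\tau)\|_{\mathcal{H}^{2k}} \sum_{j=2}^{p}\sum_{\ell=0}^{j-1} (\delta e^{-\tau})^{j-1-\ell}(\delta e^{-\tau})^{\ell}. $$
The inner summand collapses to $\delta^{j-1}e^{-(j-1)\tau}$ independently of $\ell$, so the double sum equals $\sum_{j=2}^{p} j\,\delta^{j-1}e^{-(j-1)\tau} = \delta e^{-\tau} \sum_{j=2}^{p} j\,\delta^{j-2}e^{-(j-2)\tau}$. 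Again the residual factor is uniformly bounded when $\delta \leq 1$ and $\tau>0$, so after enlarging the constant I obtain $c\delta e^{-\tau}\|\Phi(\tau)-\Psi(\tau)\|_{\mathcal{H}^{2k}}$, with a single $c$ that works for both inequalities.

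There is no real obstacle here: the entire proof is just algebraic bookkeeping on top of Lemma \ref{lem_N}, and the condition $\delta \leq 1$ is used exactly to ensure that all higher powers $\delta^{j-2} e^{-(j-2)\tau}$ are dominated by the lowest one, which is what allows the polynomial nonlinearity to be controlled by a single quadratic term uniformly in $\tau$.
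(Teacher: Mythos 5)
Your proposal is correct and follows essentially the same route as the paper: apply the two estimates of Lemma \ref{lem_N}, insert the bound $\|\Phi(\tau)\|_{\mc{H}^{2k}}\leq \delta e^{-\tau}$ from the definition of $\mc{Y}_\delta$, and use $\delta e^{-\tau}\leq 1$ to dominate every term of the sums by $\delta^2 e^{-2\tau}$ (respectively $\delta e^{-\tau}$), absorbing the finite number of terms into the constant. Nothing is missing.
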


\begin{proof}
Let $\Phi \in \mc{Y}_\delta$.
Lemma \ref{lem_N} implies the existence of a $c_1>0$ such that 
$$ \|N(\Phi(\tau))\|_{\mc{H}^{2k}} \leq c_1 \sum_{j=2}^p
\|\Phi(\tau)\|_{\mc{H}^{2k}}^j \leq c_1 \sum_{j=2}^p \left(\delta 
e^{-\tau} \right )^j. $$
Since $\delta \leq 1$ we have $\delta e^{-\tau} \leq 1$ for all $\tau>0$ and 
thus, $(\delta e^{-\tau})^j \leq \delta^2 e^{-2\tau}$ for $j=2,\dots,p$,
$\tau>0$.
This implies $\|N(\Phi(\tau))\|_{\mc{H}^{2k}} \leq (p-1)c_1 \delta^2 e^{-2\tau}$
which is the first inequality in the claim.

Let $\Psi \in \mc{Y}_\delta$ and apply Lemma \ref{lem_N} to obtain
$$\|N(\Phi(\tau))-N(\Psi(\tau))\|_{\mc{H}^{2k}}\leq c_2
\|\Phi(\tau)-\Psi(\tau)\|_{\mc{H}^{2k}}
\sum_{j=2}^p\sum_{\ell=0}^{j-1}
\|\Phi(\tau)\|_{\mc{H}^{2k}}^{j-1-\ell}
\|\Psi(\tau)\|_{\mc{H}^{2k}}^\ell $$ 
for a constant $c_2>0$.
Since $\delta \leq 1$ we observe that $\|\Phi(\tau)\|_{\mc{H}^{2k}}^{j-1-\ell}
\|\Psi(\tau)\|_{\mc{H}^{2k}}^\ell \leq (\delta e^{-\tau})^{j-1} \leq 
\delta e^{-\tau}$ for $j=2,\dots,p$, $\tau>0$ and
this implies the second assertion.
\end{proof}

\subsection{The contraction mapping}
For fixed $\mb{u} \in \mc{H}^{2k}$ with $\|\mb{u}\|_{\mc{H}^{2k}}\leq
\delta^2$ and $0 < \delta \leq 1$ 
we define the nonlinear mapping $K_\mb{u}: \mc{X} \to C([0,\infty),
\mc{H}^{2k})$ by
$$ K_\mb{u}(\Phi)(\tau):=\tilde{S}(\tau)[\mb{u}+\alpha_\mb{u}(\Phi)\mb{g}]+\int_0^\tau
\tilde{S}(\tau-\sigma)N(\Phi(\sigma))d\sigma $$
where $\alpha_\mb{u}(\Phi) \in \mathbb{C}$ is given by
$$ \alpha_\mb{u}(\Phi):=-\int_0^\infty e^{-\sigma}
(N(\Phi(\sigma))|\mb{g}^*)_{\mc{H}^{2k}}d\sigma-
(\mb{u}|\mb{g}^*)_{\mc{H}^{2k}}. $$
The integral in the definition of $K_\mb{u}$ has to be interpreted as a Riemann integral
over a continuous function with values in $\mc{H}^{2k}$.
Note that the integrals above exist
since $\Phi \in \mc{X}$ implies
$\|N(\Phi(\sigma))\|_{\mc{H}^{2k}} \lesssim 1$ for all $\sigma>0$ (cf. Lemma
\ref{lem_N}).

A fixed point $\Phi$ of $K_\mb{u}$ (i.e. $\Phi=K_\mb{u}(\Phi)$) satisfies the
equation
\begin{equation}
\label{eq_opnonlinearint} \Phi(\tau)=\tilde{S}(\tau)[\mb{u}+\alpha_\mb{u}(\Phi)\mb{g}]+\int_0^\tau
\tilde{S}(\tau-\sigma)N(\Phi(\sigma))d\sigma 
\end{equation}
and this is an integral formulation of Eq. (\ref{eq_opnonlinear}) with
initial data $\Phi(0)=\mb{u}+\alpha_\mb{u}(\Phi)\mb{g}$.

\begin{proposition}
\label{prop_Yinv}
If $\delta$ is sufficiently small then $\Phi \in \mc{Y}_\delta$ implies $K_\mb{u}(\Phi)
\in \mc{Y}_\delta$.
\end{proposition}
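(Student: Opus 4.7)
The plan is to split $K_{\mathbf{u}}(\Phi)(\tau)$ using the spectral projection $P$ and analyze each piece with the growth estimates in Proposition \ref{prop_S}. Since $\langle \mathbf{g}\rangle$ is the only unstable direction and $P\mathbf{g}=\mathbf{g}$, $(I-P)\mathbf{g}=0$, the two components behave very differently: on $(I-P)\mc{H}^{2k}$ we already have exponential decay from Proposition \ref{prop_S}, whereas on $\langle \mathbf{g}\rangle$ the semigroup grows like $e^\tau$, and only the precise definition of $\alpha_{\mathbf{u}}(\Phi)$ will save us.

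First, I would write $K_{\mathbf{u}}(\Phi)(\tau)=PK_{\mathbf{u}}(\Phi)(\tau)+(I-P)K_{\mathbf{u}}(\Phi)(\tau)$. For the $(I-P)$ part, since $(I-P)\mathbf{g}=0$ we get
\[
(I-P)K_{\mathbf{u}}(\Phi)(\tau)=\tilde{S}(\tau)(I-P)\mathbf{u}+\int_0^\tau\tilde{S}(\tau-\sigma)(I-P)N(\Phi(\sigma))\,d\sigma.
\]
Using Proposition \ref{prop_S}, boundedness of $P$, the bound $\|\mathbf{u}\|_{\mc{H}^{2k}}\leq \delta^2$ and Lemma \ref{lem_propN}, I estimate
\[
\|(I-P)K_{\mathbf{u}}(\Phi)(\tau)\|_{\mc{H}^{2k}}\lesssim \delta^2 e^{-\tau}+\int_0^\tau e^{-(\tau-\sigma)}\,\delta^2 e^{-2\sigma}\,d\sigma\lesssim \delta^2 e^{-\tau},
\]
the last inequality being the standard convolution in $\sigma$.

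The crucial step is the $P$ part. Since $P$ commutes with $\tilde{S}(\tau)$ (Lemma \ref{lem_SP}), $\tilde{S}(\tau)\mathbf{g}=e^\tau\mathbf{g}$, and $P\mathbf{v}=(\mathbf{v}|\mathbf{g}^*)_{\mc{H}^{2k}}\mathbf{g}$, I can pull $P$ inside the integral to get
\[
PK_{\mathbf{u}}(\Phi)(\tau)=\Bigl[\bigl((\mathbf{u}|\mathbf{g}^*)_{\mc{H}^{2k}}+\alpha_{\mathbf{u}}(\Phi)\bigr)e^\tau+\int_0^\tau e^{\tau-\sigma}(N(\Phi(\sigma))|\mathbf{g}^*)_{\mc{H}^{2k}}\,d\sigma\Bigr]\mathbf{g}.
\]
Substituting the definition of $\alpha_{\mathbf{u}}(\Phi)$ kills the $e^\tau$ term and turns the integral into $-\int_\tau^\infty e^{\tau-\sigma}(N(\Phi(\sigma))|\mathbf{g}^*)_{\mc{H}^{2k}}\,d\sigma\cdot \mathbf{g}$. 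Applying Lemma \ref{lem_propN} and Cauchy--Schwarz yields
\[
\|PK_{\mathbf{u}}(\Phi)(\tau)\|_{\mc{H}^{2k}}\lesssim \delta^2\,\|\mathbf{g}^*\|_{\mc{H}^{2k}}\|\mathbf{g}\|_{\mc{H}^{2k}}\int_\tau^\infty e^{\tau-\sigma}e^{-2\sigma}\,d\sigma\lesssim \delta^2 e^{-2\tau}.
\]

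Combining both bounds gives $\|K_{\mathbf{u}}(\Phi)(\tau)\|_{\mc{H}^{2k}}\leq C\delta^2 e^{-\tau}$ for some $C$ independent of $\delta$ and $\mathbf{u}$, so choosing $\delta\leq \min(1,1/C)$ forces $K_{\mathbf{u}}(\Phi)\in \mc{Y}_\delta$. Continuity of $\tau\mapsto K_{\mathbf{u}}(\Phi)(\tau)$ in $\mc{H}^{2k}$ follows from strong continuity of $\tilde{S}$ and continuity of the Riemann integral of a continuous $\mc{H}^{2k}$-valued function, so $K_{\mathbf{u}}(\Phi)\in \mc{X}$. The main subtlety is verifying the exact algebraic cancellation in the $P$-component: everything hinges on the choice of $\alpha_{\mathbf{u}}(\Phi)$ being tailored precisely so that the $e^\tau$-term vanishes and the remaining tail integral from $\tau$ to $\infty$ still converges, which is guaranteed by the quadratic-in-$\delta$ decay of $N(\Phi(\sigma))$.
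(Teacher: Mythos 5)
Your proof is correct and follows essentially the same route as the paper: the same $P$/$(I-P)$ decomposition, the same cancellation of the $e^\tau$ term via the definition of $\alpha_{\mathbf{u}}(\Phi)$ leaving the tail integral $\int_\tau^\infty e^{\tau-\sigma}(N(\Phi(\sigma))|\mathbf{g}^*)_{\mc{H}^{2k}}\,d\sigma$, and the same convolution estimates from Proposition \ref{prop_S} and Lemma \ref{lem_propN}. The only (harmless) addition is your explicit remark on continuity in $\tau$.
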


\begin{proof}
Let $\Phi \in \mc{Y}_\delta$.
We decompose $K_\mb{u}(\Phi)(\tau)=PK_\mb{u}(\Phi)(\tau)+(I-P)K_\mb{u}(\Phi)(\tau)$ and analyse the
two parts separately.
By taking the inner product of $PK_\mb{u}(\Phi)(\tau)$ with $\mb{g}^*$ we obtain
$$ (PK_\mb{u}(\Phi)(\tau)|\mb{g}^*)_{\mc{H}^{2k}}=\left
( \left. \tilde{S}(\tau)[P\mb{u}+\alpha_\mb{u}(\Phi)\mb{g}] \right | \mb{g}^*
 \right )_{\mc{H}^{2k}}
+\int_0^\tau \left (\left. \tilde{S}(\tau-\sigma)PN(\Phi(\sigma))\right | \mb{g}^*
 \right )_{\mc{H}^{2k}}d\sigma $$
where $\tilde{S}(\tau)P=P\tilde{S}(\tau)$ and the continuity of the inner product has been used.
Since $P\mb{\mb{f}}=(\mb{\mb{f}}|\mb{g}^*)_{\mc{H}^{2k}}\mb{g}$ for any 
$\mb{\mb{f}} \in
\mc{H}^{2k}$, $(\mb{g}|\mb{g}^*)_{\mc{H}^{2k}}=1$ and 
$\tilde{S}(\tau)\mb{g}=e^\tau \mb{g}$ we infer
$$ \|PK_\mb{u}(\Phi)(\tau)\|_{\mc{H}^{2k}}=\left |
e^{\tau}(\mb{u}|\mb{g}^*)_{\mc{H}^{2k}}+e^{\tau}\alpha_\mb{u}(\Phi)+\int_0^\tau
e^{\tau-\sigma}(N(\Phi(\sigma))|\mb{g}^*)_{\mc{H}^{2k}}d\sigma \right |. $$
Inserting the definition of $\alpha_\mb{u}(\Phi)$ leads to
$$ \|PK_\mb{u}(\Phi)(\tau)\|_{\mc{H}^{2k}}=\left |\int_\tau^\infty
e^{\tau-\sigma}(N(\Phi(\sigma))|\mb{g}^*)_{\mc{H}^{2k}}d\sigma \right | $$
and Lemma \ref{lem_propN} implies
$$ \|PK_\mb{u}(\Phi)(\tau)\|_{\mc{H}^{2k}} \leq c\delta^2 
\int_\tau^\infty e^{\tau-3\sigma}d\sigma=\frac{c}{3}\delta^2 e^{-2\tau} \leq
\frac{\delta}{2}e^{-\tau} $$  
for a $c>0$ and all $\tau>0$ provided that $\delta \leq 
\min\{1, \frac{3}{2c}\}$.

For the infinite--dimensional part we obtain
$$ \|(I-P)K_\mb{u}(\Phi)(\tau)\|_{\mc{H}^{2k}}\lesssim
e^{-\tau}\|\mb{u}\|_{\mc{H}^{2k}}+\int_0^\tau
e^{-\tau+\sigma}\|N(\Phi(\sigma))\|_{\mc{H}^{2k}}d\sigma $$
by Proposition \ref{prop_S} and therefore, Lemma \ref{lem_propN} implies
$$ \|(I-P)K_\mb{u}(\Phi)(\tau)\|_{\mc{H}^{2k}}\lesssim \delta^2 e^{-\tau}+\delta^2 
\int_0^\tau e^{-\tau-\sigma} d\sigma \leq 2 \delta^2 e^{-\tau}. $$ 
Hence, there exists a constant $\tilde{c}>0$ such that
$$ \|(I-P)K_\mb{u}(\Phi)(\tau)\|_{\mc{H}^{2k}} \leq \tilde{c}\delta^2 e^{-\tau} $$
and, if $\delta \leq \frac{1}{2\tilde{c}}$, we arrive at 
$\|(I-P)K_\mb{u}(\Phi)(\tau)\|_{\mc{H}^{2k}} \leq \frac{\delta}{2}e^{-\tau}$ for all
$\tau>0$ and the claim is proved with 
$\delta \leq \min\{1, \frac{3}{2c}, \frac{1}{2\tilde{c}}\}$.
\end{proof}

\begin{proposition}
\label{prop_contr}
If $\delta$ is sufficiently small then we have the estimate 
$$\|K_\mb{u}(\Phi)-K_\mb{u}(\Psi)\|_\mc{X}\leq \frac{1}{2}\|\Phi-\Psi\|_\mc{X}$$ 
for all $\Phi, \Psi \in \mc{Y}_\delta$.
\end{proposition}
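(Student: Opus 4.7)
The plan is to subtract the two integral equations, use the projection splitting $I = P + (I-P)$, and estimate each piece separately using Proposition \ref{prop_S} together with the Lipschitz bound in Lemma \ref{lem_propN}. Since $\mathbf{u}$ does not depend on $\Phi$, the difference $K_\mathbf{u}(\Phi)(\tau)-K_\mathbf{u}(\Psi)(\tau)$ equals
$$ \tilde{S}(\tau)\bigl[\alpha_\mathbf{u}(\Phi)-\alpha_\mathbf{u}(\Psi)\bigr]\mathbf{g}+\int_0^\tau \tilde{S}(\tau-\sigma)\bigl[N(\Phi(\sigma))-N(\Psi(\sigma))\bigr]d\sigma, $$
and linearity of $\alpha_\mathbf{u}$ in the $N$--integral gives $\alpha_\mathbf{u}(\Phi)-\alpha_\mathbf{u}(\Psi)=-\int_0^\infty e^{-\sigma}(N(\Phi(\sigma))-N(\Psi(\sigma))|\mathbf{g}^*)_{\mathcal{H}^{2k}}d\sigma$.

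For the stable part I would apply $I-P$, which kills the $\mathbf{g}$ contribution because $\mathbf{g}$ lies in the range of $P$. Since $P$ commutes with $\tilde{S}$ (Lemma \ref{lem_SP}) and $I-P$ is bounded, Proposition \ref{prop_S} yields
$$ \|(I-P)(K_\mathbf{u}(\Phi)-K_\mathbf{u}(\Psi))(\tau)\|_{\mathcal{H}^{2k}}\lesssim \int_0^\tau e^{-(\tau-\sigma)}\|N(\Phi(\sigma))-N(\Psi(\sigma))\|_{\mathcal{H}^{2k}}d\sigma. $$
The Lipschitz bound from Lemma \ref{lem_propN} gives the integrand $\lesssim \delta e^{-\sigma}\|\Phi-\Psi\|_\mathcal{X}$, so the integral is $\lesssim \delta \tau e^{-\tau}\|\Phi-\Psi\|_\mathcal{X}$, which is uniformly $\lesssim \delta \|\Phi-\Psi\|_\mathcal{X}$ in $\tau$.

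For the unstable part I would repeat the computation used in the proof of Proposition \ref{prop_Yinv}: take the inner product of $P(K_\mathbf{u}(\Phi)-K_\mathbf{u}(\Psi))(\tau)$ with $\mathbf{g}^*$, use $\tilde{S}(\tau)\mathbf{g}=e^\tau\mathbf{g}$, and substitute the explicit formula for $\alpha_\mathbf{u}(\Phi)-\alpha_\mathbf{u}(\Psi)$. This is precisely the step where the \emph{definition} of $\alpha_\mathbf{u}$ was engineered to produce a cancellation: the $\mathbf{u}$ terms are absent here, and the $\int_0^\infty$ combines with the $\int_0^\tau$ from the Duhamel term to leave only
$$ \|P(K_\mathbf{u}(\Phi)-K_\mathbf{u}(\Psi))(\tau)\|_{\mathcal{H}^{2k}}=\left|\int_\tau^\infty e^{\tau-\sigma}(N(\Phi(\sigma))-N(\Psi(\sigma))|\mathbf{g}^*)_{\mathcal{H}^{2k}}d\sigma\right|. $$
Applying the Lipschitz estimate and the bound $|(\,\cdot\,|\mathbf{g}^*)_{\mathcal{H}^{2k}}|\leq \|\mathbf{g}^*\|_{\mathcal{H}^{2k}}\|\cdot\|_{\mathcal{H}^{2k}}$ produces $\int_\tau^\infty e^{\tau-2\sigma}d\sigma=\tfrac{1}{2}e^{-\tau}$, giving a clean bound $\lesssim \delta e^{-\tau}\|\Phi-\Psi\|_\mathcal{X}$.

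Adding the two contributions yields $\|K_\mathbf{u}(\Phi)-K_\mathbf{u}(\Psi)\|_\mathcal{X}\leq C\delta \|\Phi-\Psi\|_\mathcal{X}$, and choosing $\delta \leq \min\{1,\tfrac{1}{2C}\}$ (consistent with the smallness required in Proposition \ref{prop_Yinv}) gives the contraction constant $\tfrac{1}{2}$. The only nontrivial point is the unstable part: without the specific choice of $\alpha_\mathbf{u}$ the $\int_0^\tau e^{\tau-\sigma}$ integral would grow like $e^\tau$, so the main obstacle is recognizing that $\alpha_\mathbf{u}$ is designed precisely to kill this growing mode, converting the integral over $[0,\tau]$ into one over $[\tau,\infty)$ where the semigroup growth is balanced by the Lipschitz decay of $N$.
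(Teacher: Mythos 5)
Your proposal is correct and follows essentially the same route as the paper: the $P$/$(I-P)$ splitting, the cancellation of the $e^{\tau}$ growth via the explicit formula for $\alpha_{\mb{u}}(\Phi)-\alpha_{\mb{u}}(\Psi)$ turning $\int_0^\tau$ into $\int_\tau^\infty$, and the $\delta\tau e^{-\tau}$ bound on the stable part are all exactly the steps in the paper's proof. No gaps.
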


\begin{proof}
Let $\Phi,\Psi \in \mc{Y}_\delta$ and consider the finite--dimensional part
$PK_\mb{u}(\Phi)(\tau)-PK_\mb{u}(\Psi)(\tau)$ first.
Pairing with $\mb{g}^*$ we obtain
\begin{multline*} \|PK_\mb{u}(\Phi)(\tau)
-PK_\mb{u}(\Psi)(\tau)\|_{\mc{H}^{2k}}\\
=\left |
[\alpha_\mb{u}(\Phi)-\alpha_\mb{u}(\Psi)]e^{\tau}+\int_0^\tau
e^{\tau-\sigma}(N(\Phi(\sigma))-N(\Psi(\sigma))|\mb{g}^*)_{\mc{H}^{2k}}d\sigma
\right | \\
= \left | \int_\tau^\infty e^{\tau-\sigma}
(N(\Phi(\sigma))-N(\Psi(\sigma))|\mb{g}^*)_{\mc{H}^{2k}}d\sigma
\right |\leq c\delta \int_\tau^\infty
e^{\tau-2\sigma}\|\Phi(\sigma)-\Psi(\sigma)\|_{\mc{H}^{2k}}d\sigma \\
\leq c \delta
\sup_{\sigma>0}\|\Phi(\sigma)-\Psi(\sigma)\|_{\mc{H}^{2k}}\int_\tau^\infty
e^{\tau-2\sigma}d\sigma=\frac{c}{2}\delta e^{-\tau}\|\Phi-\Psi\|_\mc{X}
\end{multline*}
for all $\tau>0$ by Lemma \ref{lem_propN}.
If $\delta \leq \frac{1}{2c}$ we arrive at
$$ \sup_{\tau>0}\|PK_\mb{u}(\Phi)(\tau)-PK_\mb{u}(\Psi)(\tau)\|_{\mc{H}^{2k}} 
\leq \frac{1}{4} \|\Phi-\Psi\|_\mc{X}. $$

For the infinite--dimensional part we have
\begin{multline*}
\|(I-P)K_\mb{u}(\Phi)(\tau)-(I-P)K_\mb{u}(\Psi)(\tau)\|_{\mc{H}^{2k}} \leq \int_0^\tau
\|\tilde{S}(\tau-\sigma)(I-P)[N(\Phi(\sigma))-N(\Psi(\sigma))]\|_{\mc{H}^{2k}}d\sigma\\
\lesssim \int_0^\tau
e^{-\tau+\sigma}\|N(\Phi(\sigma))-N(\Psi(\sigma))\|_{\mc{H}^{2k}}d\sigma
\lesssim \delta \int_0^\tau e^{-\tau}
\|\Phi(\sigma)-\Psi(\sigma)\|_{\mc{H}^{2k}}d\sigma \\
\leq \delta \tau e^{-\tau} \sup_{\sigma>0}
\|\Phi(\sigma)-\Psi(\sigma)\|_{\mc{H}^{2k}}
\end{multline*}
by Lemma \ref{lem_propN} again.
However, with $\delta$ small enough this implies
$$ \sup_{\tau>0}\|(I-P)K_\mb{u}(\Phi)(\tau)-(I-P)K_\mb{u}(\Psi)(\tau)\|_{\mc{H}^{2k}} 
\leq \frac{1}{4}
\|\Phi-\Psi\|_\mc{X} $$
and we arrive at the claim.
\end{proof}

\subsection{Global existence and uniqueness of the solution}
Propositions \ref{prop_Yinv} and \ref{prop_contr} show that there exists a $\delta>0$ 
such that, if $\|\mb{u}\|_{\mc{H}^{2k}} \leq \delta^2$, 
the mapping $K_\mb{u}$ restricted to $\mc{Y}_\delta$ has range in $\mc{Y}_\delta$ 
and is a contraction
with respect to $\|\cdot\|_\mc{X}$. Since $\mc{Y}_\delta \subset \mc{X}$ is closed, 
the contraction mapping principle yields the existence of a unique 
fixed point of $K_\mb{u}$ in $\mc{Y}_\delta$.
In fact, the fixed point is unique in the whole space $\mc{X}$, as the following standard
argument shows.

\begin{lemma}
Let $\Phi \in \mc{Y}_\delta$, $\Psi \in \mc{X}$ be fixed points of $K_\mb{u}$ 
with $\Phi(0)=\Psi(0)$. Then $\Phi=\Psi$.
\end{lemma}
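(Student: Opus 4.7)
The plan is to subtract the two fixed-point relations and close a Gronwall argument on every finite interval. First, I would evaluate both identities at $\tau = 0$: since $\tilde{S}(0) = I$ and the Duhamel integral vanishes, the fixed-point equation gives $\Phi(0) = \mb{u} + \alpha_\mb{u}(\Phi)\mb{g}$ and $\Psi(0) = \mb{u} + \alpha_\mb{u}(\Psi)\mb{g}$. The hypothesis $\Phi(0) = \Psi(0)$ together with $\mb{g} \neq 0$ therefore forces $\alpha_\mb{u}(\Phi) = \alpha_\mb{u}(\Psi)$. Subtracting the two fixed-point equations then cancels the linear inhomogeneity entirely and leaves the Volterra identity
$$ \Phi(\tau) - \Psi(\tau) = \int_0^\tau \tilde{S}(\tau - \sigma)\bigl[N(\Phi(\sigma)) - N(\Psi(\sigma))\bigr]\, d\sigma. $$

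Next, since $\Phi, \Psi \in \mc{X}$, the quantity $M := \max(\|\Phi\|_\mc{X}, \|\Psi\|_\mc{X})$ is finite. The Lipschitz estimate in Lemma \ref{lem_N} yields a constant $C_M > 0$ (depending on $M$ via the polynomial $\sum_{j=2}^p\sum_{\ell=0}^{j-1}M^{j-1}$) such that
$$ \|N(\Phi(\sigma)) - N(\Psi(\sigma))\|_{\mc{H}^{2k}} \leq C_M \|\Phi(\sigma) - \Psi(\sigma)\|_{\mc{H}^{2k}} $$
uniformly in $\sigma$. Combined with the growth bound $\|\tilde{S}(\tau)\|_{\mc{B}(\mc{H}^{2k})} \lesssim e^\tau$ from Proposition \ref{prop_S}, this gives
$$ \|\Phi(\tau) - \Psi(\tau)\|_{\mc{H}^{2k}} \leq C \int_0^\tau e^{\tau - \sigma}\|\Phi(\sigma) - \Psi(\sigma)\|_{\mc{H}^{2k}}\, d\sigma $$
for a constant $C$ independent of $\tau$.

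Setting $g(\tau) := e^{-\tau}\|\Phi(\tau) - \Psi(\tau)\|_{\mc{H}^{2k}}$, the inequality becomes $g(\tau) \leq C \int_0^\tau g(\sigma)\, d\sigma$ with $g$ continuous, bounded on each $[0,T]$, and $g(0) = 0$. Iterating $n$ times produces $g(\tau) \leq C^n \tau^n / n! \cdot \sup_{[0,T]} g$, which forces $g \equiv 0$ on $[0, T]$ for every $T$, and hence $\Phi = \Psi$ on $[0, \infty)$.

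The only delicate point to note is that the definition of $\alpha_\mb{u}(\Phi)$ involves an integral over the entire half-line, so the fixed-point equation is genuinely nonlocal in time; a naive local-in-time uniqueness argument would thus be blocked. The one-line identification $\alpha_\mb{u}(\Phi) = \alpha_\mb{u}(\Psi)$ coming from $\Phi(0) = \Psi(0)$ is precisely what collapses this nonlocal obstruction and reduces the problem to the standard Volterra Gronwall argument above.
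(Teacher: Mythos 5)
Your proof is correct and follows essentially the same route as the paper: both reduce to the homogeneous Volterra identity $\Phi(\tau)-\Psi(\tau)=\int_0^\tau \tilde{S}(\tau-\sigma)[N(\Phi(\sigma))-N(\Psi(\sigma))]d\sigma$ and close with a standard argument (your Gronwall iteration versus the paper's contraction on successive small intervals are interchangeable). Your explicit observation that $\Phi(0)=\Psi(0)$ forces $\alpha_\mb{u}(\Phi)=\alpha_\mb{u}(\Psi)$, which kills the nonlocal inhomogeneous term, is a step the paper leaves implicit and is worth stating.
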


\begin{proof}
Fix $\tau_0>0$.
The function $\Phi-\Psi$ satisfies the integral equation
$$ \Phi(\tau)-\Psi(\tau)=\int_0^\tau \tilde{S}(\tau-\sigma)[N(\Phi(\sigma))-N(\Psi(\sigma))]d\sigma $$
and hence, for all $\tau \in [0,\tau_0]$, we have
\begin{multline*}  \|\Phi(\tau)-\Psi(\tau)\|_{\mc{H}^{2k}} \leq \int_0^\tau e^{\tau-\sigma}
\|N(\Phi(\sigma))-N(\Psi(\sigma))\|_{\mc{H}^{2k}}d\sigma \\
\leq \tau e^\tau 
\sup_{\sigma \in (0,\tau)} \|N(\Phi(\sigma))-N(\Psi(\sigma))\|_{\mc{H}^{2k}}\\
\leq \tau e^\tau C M(\tau_0) \sup_{\sigma \in (0,\tau)} \|\Phi(\sigma)-\Psi(\sigma)\|_{\mc{H}^{2k}}
\end{multline*}
by Lemma \ref{lem_N} where 
$$M(\tau_0):=
\sup_{\sigma \in (0,\tau_0)} \sum_{j=2}^p \sum_{\ell=0}^{j-1}
\|\Phi(\sigma)\|_{\mc{H}^{2k}}^{j-1-\ell}\|\Psi(\sigma)\|_{\mc{H}^{2k}}^\ell < \infty$$
and $C>0$.
Thus, there exists a $\tau_1 \in (0,\tau_0]$ such that
$$ \sup_{\tau \in (0,\tau_1)}\|\Phi(\tau)-\Psi(\tau)\|_{\mc{H}^{2k}} \leq 
\frac{1}{2}\sup_{\tau \in (0,\tau_1)}\|\Phi(\tau)-\Psi(\tau)\|_{\mc{H}^{2k}} $$
and this implies $\Phi(\tau)=\Psi(\tau)$ for all $\tau \in [0,\tau_1]$.
Iterating this argument we obtain $\Phi(\tau)=\Psi(\tau)$ for all $\tau \in [0,\tau_0]$ and, 
since $\tau_0>0$ was arbitrary, we conclude $\Phi=\Psi$.
\end{proof}

\section{The main theorem}
We denote by $L_{\mc{H}^{2k}}$ the part of $L$ in $\mc{H}^{2k}$, i.e. the
operator $L_{\mc{H}^{2k}}: \mc{D}({\mc{H}^{2k}}) \subset \mc{H}^{2k} \to
\mc{H}^{2k}$ defined by 
$$ \mc{D}({\mc{H}^{2k}}):=\{\mb{u} \in \mc{D}(L) \cap \mc{H}^{2k}: L\mb{u} \in
\mc{H}^{2k}\} $$
and $L_{\mc{H}^{2k}} \mb{u}:=L \mb{u}$.
Note that $L_{\mc{H}^{2k}}$ is densely defined since $\mc{H}^{2(k+1)} \subset
\mc{D}(L_{\mc{H}^{2k}})$ (cf. \cite{roland2}).
Now we are ready to formulate and prove our main result.

\begin{theorem}
\label{thm_nlstabop}
Let $k \in \mathbb{N}$ be sufficiently large, $\delta>0$ sufficiently small.
Then, for any 
$\mb{u} \in \mc{D}(L_{\mc{H}^{2k}})$ with 
$\|\mb{u}\|_{\mc{H}^{2k}} \leq \delta^2$,
there exists an $\alpha_\mb{u} \in \mathbb{C}$ such that the equation
$$ \frac{d}{d\tau}\Phi(\tau)=\left ( L-\frac{2}{p-1} \right )\Phi(\tau)+N(\Phi(\tau)) $$
has a unique global $\mc{H}^{2k}$--solution $\Phi$ with initial data 
$\Phi(0)=\mb{u}+\alpha_\mb{u}\mb{g}$ 
that satisfies $\|\Phi(\tau)\|_{\mc{H}^{2k}} \lesssim e^{-\tau}$ for all 
$\tau>0$.
\end{theorem}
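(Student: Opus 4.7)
The plan is to apply the contraction mapping theorem to the map $K_\mb{u}$ on the closed subset $\mc{Y}_\delta$ of the Banach space $\mc{X}$, then extract the correct value of $\alpha_\mb{u}$ from the resulting fixed point, and finally upgrade the resulting mild solution to an $\mc{H}^{2k}$-solution in the strong sense required by the definition in Section~3. First I choose $k$ large enough for Proposition~\ref{prop_S}, and $\delta \in (0,1]$ small enough that both Propositions~\ref{prop_Yinv} and~\ref{prop_contr} apply. For any $\mb{u} \in \mc{D}(L_{\mc{H}^{2k}})$ with $\|\mb{u}\|_{\mc{H}^{2k}} \leq \delta^2$, these propositions guarantee that $K_\mb{u}$ sends $\mc{Y}_\delta$ into itself and is a $\tfrac{1}{2}$-contraction there with respect to $\|\cdot\|_\mc{X}$. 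Since $\mc{Y}_\delta$ is closed in $\mc{X}$, Banach's fixed point theorem yields a unique $\Phi \in \mc{Y}_\delta$ satisfying $K_\mb{u}(\Phi) = \Phi$. Evaluating the fixed point identity at $\tau = 0$ gives $\Phi(0) = \mb{u} + \alpha_\mb{u}(\Phi)\mb{g}$, so setting $\alpha_\mb{u} := \alpha_\mb{u}(\Phi)$ fixes the correction constant, the bound $\|\Phi(\tau)\|_{\mc{H}^{2k}} \lesssim e^{-\tau}$ is built into $\mc{Y}_\delta$, and uniqueness in the whole space $\mc{X}$ under the same initial condition follows from the preceding lemma.

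What remains is to verify that this fixed point is an $\mc{H}^{2k}$-solution in the strong sense of the definition in Section~3, namely that $\Phi$ is strongly $\mc{H}^{2k}$-differentiable, takes values in $\mc{D}(L_{\mc{H}^{2k}})$, and satisfies Eq.~\eqref{eq_opnonlinear} pointwise. This is the only place in the whole argument where the hypothesis $\mb{u} \in \mc{D}(L_{\mc{H}^{2k}})$ is used: since $\mb{g}$ is an eigenfunction of $L - \tfrac{2}{p-1}$ lying in $\mc{H}^{2k}$, it belongs to $\mc{D}(L_{\mc{H}^{2k}})$ as well, and hence the corrected initial datum $\mb{u} + \alpha_\mb{u}\mb{g}$ does so too. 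Standard semigroup theory on the $L$-admissible subspace $\mc{H}^{2k}$ then implies that the homogeneous term $\tau \mapsto \tilde{S}(\tau)[\mb{u} + \alpha_\mb{u}\mb{g}]$ is continuously differentiable in $\mc{H}^{2k}$ with derivative $(L - \tfrac{2}{p-1})\tilde{S}(\tau)[\mb{u} + \alpha_\mb{u}\mb{g}]$.

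The main obstacle is the analogous statement for the Duhamel term $\int_0^\tau \tilde{S}(\tau - \sigma) N(\Phi(\sigma))\, d\sigma$. The crucial structural input is that $N: \mc{H}^{2k} \to \mc{H}^{2k}$ is polynomial in its argument, and therefore Fr\'echet smooth; Lemma~\ref{lem_N} combined with $\Phi \in \mc{X}$ already provides continuity of $\sigma \mapsto N(\Phi(\sigma))$ in $\mc{H}^{2k}$. A standard bootstrap argument for abstract semilinear evolution equations then promotes the mild fixed point to a classical solution: one first upgrades continuity to local Lipschitz regularity of $\sigma \mapsto N(\Phi(\sigma))$, and then applies the classical regularity result that the mild solution of an inhomogeneous abstract Cauchy problem with locally Lipschitz $C^1$-inhomogeneity and initial datum in the domain of the generator is a strong solution. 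This yields $\Phi(\tau) \in \mc{D}(L_{\mc{H}^{2k}})$ and the pointwise validity of Eq.~\eqref{eq_opnonlinear} in $\mc{H}^{2k}$ for all $\tau > 0$, completing the proof.
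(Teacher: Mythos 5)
Your proposal is correct and follows essentially the same route as the paper: invoke Propositions \ref{prop_Yinv} and \ref{prop_contr} to get a unique fixed point of $K_\mb{u}$ in $\mc{Y}_\delta$, read off $\alpha_\mb{u}:=\alpha_\mb{u}(\Phi)$ and the decay from membership in $\mc{Y}_\delta$, and then upgrade the mild solution to a strong $\mc{H}^{2k}$--solution using $\mb{u}+\alpha_\mb{u}\mb{g}\in\mc{D}(L_{\mc{H}^{2k}})$. The only cosmetic difference is in the last step, where the paper differentiates the Duhamel term directly and justifies pulling the closed operator through the integral, while you appeal to the standard abstract regularity theorem for semilinear equations with smooth nonlinearity; both rest on the same semigroup-theoretic facts.
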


\begin{proof}
By Theorem \ref{thm_S}, $\tilde{S}(\tau)|_{\mc{H}^{2k}}$ defines a 
semigroup on $\mc{H}^{2k}$ and its generator is $L_{\mc{H}^{2k}}-\frac{2}{p-1}$.
Furthermore, $\mb{g} \in \mc{D}(L_{\mc{H}^{2k}})$ and thus, $\tau \mapsto
\tilde{S}(\tau)\mb{v}$, where $\mb{v}:=\mb{u}+\alpha_\mb{u}(\Phi)\mb{g} \in
\mc{D}(L_{\mc{H}^{2k}})$, is strongly differentiable in $\mc{H}^{2k}$ and we have
$\frac{d}{d\tau}\tilde{S}(\tau)\mb{v}=(L_{\mc{H}^{2k}}-\frac{2}{p-1})
\tilde{S}(\tau)\mb{v}$.
According to the results of Sec. \ref{sec_globnl} there exists a unique
(in $\mc{X}$) function
$\Phi \in \mc{Y}_\delta$ that satisfies
$$ \Phi(\tau)=\tilde{S}(\tau)\mb{v}+\int_0^\tau
\tilde{S}(\tau-\sigma)N(\Phi(\sigma))d\sigma. $$
Differentiating this equation (with respect to $\|\cdot\|_{\mc{H}^{2k}}$) we
obtain
\begin{multline*} \frac{d}{d\tau}\Phi(\tau)=\left (L_{\mc{H}^{2k}}-\frac{2}{p-1} \right )\left
(\tilde{S}(\tau)\mb{v}+\int_0^\tau \tilde{S}(\tau-\sigma)N(\Phi(\sigma)) \right
) + N(\Phi(\tau))\\
= \left (L_{\mc{H}^{2k}}-\frac{2}{p-1} \right )\Phi(\tau)+N(\Phi(\tau))
=\left (L-\frac{2}{p-1} \right )\Phi(\tau)+N(\Phi(\tau))
\end{multline*}
where we have interchanged the operator $L_{\mc{H}^{2k}}-\frac{2}{p-1}$ and the
integral sign which is justified by the closedness of 
$L_{\mc{H}^{2k}}-\frac{2}{p-1}$.

\end{proof}
In other words, Theorem \ref{thm_nlstabop} tells us that, given sufficiently 
regular and small data, there exists a "correction"
of the data (which consists of adding a multiple of the gauge mode) 
that leads to a global solution that goes to zero as $\tau \to \infty$. 
The correction of the data corresponds exactly to what is called
"tuning out" the gauge instability in the
heuristic picture.  

Finally, we remark that the required 
degree of differentiability $k$ in Theorem
\ref{thm_nlstabop} could be specified more explicitly. To this end one would
have to optimize the results of \cite{roland2} which is certainly possible.

\section{Acknowledgments}
The author would like to thank Peter C. Aichelburg and Nikodem Szpak for 
helpful discussions and their interest in this work.

\begin{appendix}
\section{Proof of Lemma \ref{lem_hardy}}
\label{proof_lem_hardy}
\begin{enumerate}
\item Integration by parts and the Cauchy--Schwarz inequality yield
\begin{multline*}
\int_0^1 |v(x)|^2 dx = \int_0^1
\frac{1}{x^{2k}}|x^k v(x)|^2dx \\=\left. -\frac{|x^{k}v(x)|^2}{(2k-1)x^{2k-1}}
\right |_0^1+\frac{2}{2k-1} \mathrm{Re} \int_0^1 
\frac{x^k}{x^{2k-1}} \overline{v(x)} \frac{d}{dx}(x^k v(x))dx\\
\lesssim \left (\int_0^1 |v(x)|^2 dx \right )^{1/2}\left (\int_0^1
\frac{1}{x^{2k-2}}\left |\frac{d}{dx}x^k v(x)\right |^2dx \right )^{1/2} 
\end{multline*}
\item We have
$$ \frac{d^j}{dx^j}\frac{u(x)}{x}=\frac{1}{x^{j+1}}\sum_{\ell=0}^j (-1)^\ell
\ell ! \left ( \begin{array}{c}j \\ \ell \end{array} \right )x^{j-\ell}
u^{(j-\ell)}(x)
$$
and thus,
\begin{multline*}
\frac{d}{dx}x^{j+1}\left (\frac{d^j}{dx^j}\frac{u(x)}{x} \right
)=\sum_{\ell=0}^{j-1}(-1)^\ell \frac{j!}{(j-\ell-1)!}x^{j-\ell-1}
u^{(j-\ell)}(x)\\
+\underbrace{
\sum_{\ell=0}^j (-1)^\ell \frac{j!}{(j-\ell)!}x^{j-\ell}u^{(j-\ell+1)}(x)}_
{\sum_{\ell=-1}^{j-1} (-1)^{\ell+1} \frac{j!}{(j-\ell-1)!}x^{j-\ell-1}
u^{(j-\ell)}(x)}=x^j u^{(j+1)}(x)
\end{multline*}
Applying part 1 with $v(x)=\frac{d^j}{dx^j}\frac{u(x)}{x}$ and $k=j+1$ yields the claim.
\end{enumerate}
\end{appendix}

\bibliography{nonlinear}{}

\begin{thebibliography}{10}

\bibitem{bizon}
Piotr Bizo{\'n}, Tadeusz Chmaj, and Zbis{\l}aw Tabor.
\newblock On blowup for semilinear wave equations with a focusing nonlinearity.
\newblock {\em Nonlinearity}, 17(6):2187--2201, 2004.

\bibitem{roland1}
Roland Donninger.
\newblock The radial wave operator in similarity coordinates.
\newblock {\em Preprint arXiv:0805.0520}, 2008.

\bibitem{roland2}
Roland Donninger.
\newblock Asymptotics and analytic modes for the wave equation in similarity
  coordinates.
\newblock {\em J. Evol. Equ.}, 9(3):511--523, 2009.

\bibitem{pohozaev}
V.~A. Galaktionov and S.~I. Pohozaev.
\newblock On similarity solutions and blow-up spectra for a semilinear wave
  equation.
\newblock {\em Quart. Appl. Math.}, 61(3):583--600, 2003.

\bibitem{schlag}
J.~Krieger and W.~Schlag.
\newblock On the focusing critical semi-linear wave equation.
\newblock {\em Amer. J. Math.}, 129(3):843--913, 2007.

\bibitem{merle3}
Frank Merle and Hatem Zaag.
\newblock Determination of the blow-up rate for the semilinear wave equation.
\newblock {\em Amer. J. Math.}, 125(5):1147--1164, 2003.

\bibitem{merle2}
Frank Merle and Hatem Zaag.
\newblock Determination of the blow-up rate for a critical semilinear wave
  equation.
\newblock {\em Math. Ann.}, 331(2):395--416, 2005.

\bibitem{merle1}
Frank Merle and Hatem Zaag.
\newblock On growth rate near the blowup surface for semilinear wave equations.
\newblock {\em Int. Math. Res. Not.}, (19):1127--1155, 2005.

\bibitem{merle4}
Frank Merle and Hatem Zaag.
\newblock Existence and universality of the blow-up profile for the semilinear
  wave equation in one space dimension.
\newblock {\em J. Funct. Anal.}, 253(1):43--121, 2007.

\bibitem{merle6}
Frank Merle and Hatem Zaag.
\newblock Existence and characterization of characteristic points for a
  semilinear wave equation in one space dimension.
\newblock {\em Preprint arXiv:0811.4068}, 2008.

\bibitem{merle5}
Frank Merle and Hatem Zaag.
\newblock Openness of the set of non-characteristic points and regularity of
  the blow-up curve for the 1 {D} semilinear wave equation.
\newblock {\em Comm. Math. Phys.}, 282(1):55--86, 2008.

\bibitem{schlag2}
Wilhelm Schlag.
\newblock Spectral theory and nonlinear partial differential equations: a
  survey.
\newblock {\em Discrete Contin. Dyn. Syst.}, 15(3):703--723, 2006.

\end{thebibliography}
\bibliographystyle{plain}

\end{document}